\newtheorem{Proposition}{Proposition}
\newtheorem{rmk}{Remark}
\newtheorem{defi}{Definition}
\newcommand{\ac}{$\frac{1}{100}$}
\newcommand{\bc}{$\frac{1}{100}$}
\newcommand{\cc}{$\frac{1}{2}$}
\newcommand{\dc}{$\frac{1}{2}$}
\newcommand{\Addkz}{$\mathbf{26}$}
\newcommand{\Addko}{$\frac{-15\sqrt{5}-25}{8}$}
\newcommand{\Addkt}{$\frac{15\sqrt{5}-25}{8}$}
\newcommand{\Addkth}{$-\frac{1}{4}$}
\newcommand{\Addez}{$\mathbf{18}$}
\newcommand{\Addeo}{$\frac{-15\sqrt{5}-25}{8}$}
\newcommand{\Addet}{$\frac{15\sqrt{5}-25}{8}$}
\newcommand{\Addeth}{$-\frac{1}{4}$}
\newcommand{\Addef}{$\frac{3\sqrt{5}-5}{4}$}
\newcommand{\Addes}{$-\frac{5}{2}$}
\newcommand{\Addese}{$\frac{-3\sqrt{5}-5}{4}$}
\newcommand{\Addiz}{$\mathbf{10}$}
\newcommand{\Addif}{$\frac{3\sqrt{5}-5}{4}$}
\newcommand{\Addis}{$-\frac{5}{2}$}
\newcommand{\Addise}{$\frac{-3\sqrt{5}-5}{4}$}
\newcommand{\Aoddkz}{$\mathbf{26}$}
\newcommand{\Aoddko}{$\frac{-15\sqrt{5}-25}{8}$}
\newcommand{\Aoddkt}{$\frac{15\sqrt{5}-25}{8}$}
\newcommand{\Aoddkth}{$0$}
\newcommand{\Aoddez}{$\mathbf{17}$}
\newcommand{\Aoddeo}{$-7$}
\newcommand{\Aoddet}{$0$}
\newcommand{\Aoddeth}{$0$}
\newcommand{\Aoddef}{$0$}
\newcommand{\Aoddes}{$-\frac{5}{2}$}
\newcommand{\Aoddese}{$-$\bc}
\newcommand{\Aoddiz}{$\mathbf{10}$}
\newcommand{\Aoddif}{$0$}
\newcommand{\Aoddis}{$-$\cc}
\newcommand{\Aoddise}{$-$\dc}
\newcommand{\Moddkz}{$\mathbf{30}$}
\newcommand{\Moddko}{$\frac{-15\sqrt{5}-25}{8}$}
\newcommand{\Moddkt}{$0$}
\newcommand{\Moddkth}{$0$}
\newcommand{\Moddez}{$\mathbf{17}$}
\newcommand{\Moddeo}{$-7$}
\newcommand{\Moddet}{$0$}
\newcommand{\Moddeth}{$0$}
\newcommand{\Moddef}{$0$}
\newcommand{\Moddes}{$-$\ac}
\newcommand{\Moddese}{$-$\bc}
\newcommand{\Moddiz}{$\mathbf{10}$}
\newcommand{\Moddif}{$0$}
\newcommand{\Moddis}{$-$\cc}
\newcommand{\Moddise}{$-$\dc}
\newcommand{\Asoddkz}{$\mathbf{0}$}
\newcommand{\Asoddko}{$0$}
\newcommand{\Asoddkt}{$0$}
\newcommand{\Asoddkth}{$0$}
\newcommand{\Asoddez}{$\mathbf{0}$}
\newcommand{\Asoddeo}{$0$}
\newcommand{\Asoddet}{$0$}
\newcommand{\Asoddeth}{$0$}
\newcommand{\Asoddef}{$0$}
\newcommand{\Asoddes}{$-\frac{5}{2} +$ \ac}
\newcommand{\Asoddese}{$0$}
\newcommand{\Asoddiz}{$\mathbf{0}$}
\newcommand{\Asoddif}{$0$}
\newcommand{\Asoddis}{$0$}
\newcommand{\Asoddise}{$0$}
\newcommand{\MoLoddkz}{$\mathbf{30}$}
\newcommand{\MoLoddko}{$\frac{-15\sqrt{5}-25}{8}$}
\newcommand{\MoLoddkt}{$\frac{747\sqrt{5}+1245}{2720}$}
\newcommand{\MoLoddkth}{$0$}
\newcommand{\MoLoddez}{$\mathbf{17+\frac{249}{170000}}$}
\newcommand{\MoLoddeo}{$-7$}
\newcommand{\MoLoddet}{$0$}
\newcommand{\MoLoddeth}{$0$}
\newcommand{\MoLoddef}{$0$}
\newcommand{\MoLoddes}{$-\frac{5}{2}$}
\newcommand{\MoLoddese}{$-$\bc}
\newcommand{\MoLoddiz}{$\mathbf{10}$}
\newcommand{\MoLoddif}{$0$}
\newcommand{\MoLoddis}{$-$\cc}
\newcommand{\MoLoddise}{$-$\dc}
\newcommand{\MoLoddie}{$\frac{249}{3400}$}
\newcommand{\Atddkz}{$\mathbf{26}$}
\newcommand{\Atddko}{$\frac{-15\sqrt{5}-25}{8}$}
\newcommand{\Atddkt}{$\frac{15\sqrt{5}-25}{8}$}
\newcommand{\Atddkth}{$-\frac{1}{4}$}
\newcommand{\Atddez}{$\mathbf{17}$}
\newcommand{\Atddeo}{$-7$}
\newcommand{\Atddet}{$\frac{15\sqrt{5}-25}{8}$}
\newcommand{\Atddeth}{$0$}
\newcommand{\Atddef}{$\frac{3\sqrt{5}-5}{4}$}
\newcommand{\Atddes}{$-\frac{5}{2}$}
\newcommand{\Atddese}{$\frac{-3\sqrt{5}-5}{4}$}
\newcommand{\Atdden}{$\frac{1}{4}$}
\newcommand{\Atddiz}{$\mathbf{10}$}
\newcommand{\Atddif}{$0$}
\newcommand{\Atddis}{$-\frac{5}{2}$}
\newcommand{\Atddise}{$-$\cc}
\newcommand{\Atddie}{$-\frac{5}{14}$}
\newcommand{\Astddkz}{$\mathbf{0}$}
\newcommand{\Astddko}{$0$}
\newcommand{\Astddkt}{$0$}
\newcommand{\Astddkth}{$0$}
\newcommand{\Astddez}{$\mathbf{0}$}
\newcommand{\Astddeo}{$0$}
\newcommand{\Astddet}{$0$}
\newcommand{\Astddeth}{$0$}
\newcommand{\Astddef}{$0$}
\newcommand{\Astddes}{$0$}
\newcommand{\Astddese}{$\frac{-3\sqrt{5}-5}{4} +$\bc}
\newcommand{\Astddiz}{$\mathbf{0}$}
\newcommand{\Astddif}{$0$}
\newcommand{\Astddis}{$-\frac{5}{2} +$\cc}
\newcommand{\Astddise}{$0$}
\newcommand{\Astddie}{$-\frac{5}{14}$}
\newcommand{\AoLtddkz}{$\mathbf{26+4(\frac{747\sqrt{5}+1745}{2720})}$}
\newcommand{\AoLtddko}{$\frac{-15\sqrt{5}-25}{8}$}
\newcommand{\AoLtddkt}{$\frac{15\sqrt{5}-25}{8}$}
\newcommand{\AoLtddkth}{$\frac{3\sqrt{5}-505}{2720}$}
\newcommand{\AoLtddez}{$\mathbf{17}$}
\newcommand{\AoLtddeo}{$-7$}
\newcommand{\AoLtddet}{$\frac{7}{5}$}
\newcommand{\AoLtddeth}{$0$}
\newcommand{\AoLtddef}{$\frac{75\sqrt{5}+124}{680}$}
\newcommand{\AoLtddes}{$-\frac{5}{2}+2(\frac{1}{4})$}
\newcommand{\AoLtddese}{$\frac{-3\sqrt{5}-5}{4}$}
\newcommand{\AoLtdden}{$\frac{1}{4}$}
\newcommand{\AoLtddiz}{$\mathbf{10+2(\frac{1}{10})}$}
\newcommand{\AoLtddif}{$0$}
\newcommand{\AoLtddis}{$-\frac{5}{2}$}
\newcommand{\AoLtddise}{$-$\dc$+\frac{1}{56}$}
\newcommand{\AoLtddie}{$-\frac{5}{14}$}
\newcommand{\AoLtddite}{$2(\frac{1}{10})$}
\newcommand{\AoLtddiel}{$\frac{1}{56}$}
\newcommand{\AoLtdditw}{$2(\frac{75\sqrt{5}+124}{3400})$}
\newcommand{\Asthddkz}{$\mathbf{0}$}
\newcommand{\Asthddko}{$0$}
\newcommand{\Asthddkt}{$0$}
\newcommand{\Asthddkth}{$0$}
\newcommand{\Asthddez}{$\mathbf{0}$}
\newcommand{\Asthddeo}{$0$}
\newcommand{\Asthddet}{$0$}
\newcommand{\Asthddeth}{$0$}
\newcommand{\Asthddef}{$0$}
\newcommand{\Asthddes}{$0$}
\newcommand{\Asthddese}{$0$}
\newcommand{\Asthddiz}{$\mathbf{0}$}
\newcommand{\Asthddif}{$0$}
\newcommand{\Asthddis}{$0$}
\newcommand{\Asthddise}{$-2$}
\newcommand{\AtLthddkz}{$\mathbf{26}$}
\newcommand{\AtLthddko}{$\frac{-15\sqrt{5}-25}{8}$}
\newcommand{\AtLthddkt}{$\frac{15\sqrt{5}-25}{8}$}
\newcommand{\AtLthddkth}{$-\frac{1}{4}$}
\newcommand{\AtLthddez}{$\mathbf{17+2(\frac{7}{5})}$}
\newcommand{\AtLthddeo}{$-7$}
\newcommand{\AtLthddet}{$\frac{15\sqrt{5}-25}{8}$}
\newcommand{\AtLthddeth}{$\frac{-3\sqrt{5}+5}{8}$}
\newcommand{\AtLthddef}{$\frac{3\sqrt{5}-5}{4}$}
\newcommand{\AtLthddes}{$-\frac{5}{2}$}
\newcommand{\AtLthddese}{$\frac{-3\sqrt{5}-5}{4}$}
\newcommand{\AtLthdden}{$\frac{1}{4}+\frac{-3\sqrt{5}+5}{8}$}
\newcommand{\AtLthddethirt}{$\frac{7}{5}$}
\newcommand{\AtLthddiz}{$\mathbf{10}$}
\newcommand{\AtLthddif}{$\frac{1}{2}$}
\newcommand{\AtLthddis}{$-\frac{5}{2}$}
\newcommand{\AtLthddise}{$-2-$\dc}
\newcommand{\AtLthddie}{$-\frac{5}{14}+\frac{1}{2}$}
\newcommand{\AtLthddite}{$0$}
\newcommand{\AtLthddiel}{$0$}
\newcommand{\AtLthdditw}{$0$}
\newcommand{\Adkz}{\mathbf{13}}
\newcommand{\Adko}{\frac{-15\sqrt{5}-25}{8}}
\newcommand{\Adkt}{\frac{15\sqrt{5}-25}{8}}
\newcommand{\Adkth}{-\frac{1}{4}}
\newcommand{\Adiz}{\mathbf{5}}
\newcommand{\Adif}{\frac{3\sqrt{5}-5}{4}}
\newcommand{\Adis}{-\frac{5}{2}}
\newcommand{\Adise}{\frac{-3\sqrt{5}-5}{4}}
\newcommand{\Aodkz}{\mathbf{13}}
\newcommand{\Aodko}{-7}
\newcommand{\Aodkt}{\frac{15\sqrt{5}-25}{8}}
\newcommand{\Aodkth}{0}
\newcommand{\Aodiz}{\mathbf{4.8}}
\newcommand{\Aodif}{0}
\newcommand{\Aodis}{-2}
\newcommand{\Aodise}{-\frac{1}{2}}
\newcommand{\Modkz}{\mathbf{15}}
\newcommand{\Modko}{-7}
\newcommand{\Modkt}{0}
\newcommand{\Modkth}{0}
\newcommand{\Modiz}{\mathbf{4.8}}
\newcommand{\Modif}{0}
\newcommand{\Modis}{-\frac{1}{2}}
\newcommand{\Modise}{-\frac{1}{2}}
\newcommand{\Asodkz}{\mathbf{0}}
\newcommand{\Asodko}{0}
\newcommand{\Asodkt}{0}
\newcommand{\Asodkth}{0}
\newcommand{\Asodiz}{\mathbf{0}}
\newcommand{\Asodif}{0}
\newcommand{\Asodis}{-2+\frac{1}{2}}
\newcommand{\Asodise}{0}
\newcommand{\MoLodkz}{\mathbf{15}}
\newcommand{\MoLodko}{-7}
\newcommand{\MoLodkt}{\frac{35}{16}}
\newcommand{\MoLodkth}{0}
\newcommand{\MoLodiz}{\mathbf{4.8+\frac{5}{32}}}
\newcommand{\MoLodif}{0}
\newcommand{\MoLodis}{-2}
\newcommand{\MoLodise}{-\frac{1}{2}}
\newcommand{\Astdkz}{\mathbf{0}}
\newcommand{\Astdko}{0}
\newcommand{\Astdkt}{0}
\newcommand{\Astdkth}{0}
\newcommand{\Astdiz}{\mathbf{0}}
\newcommand{\Astdif}{0}
\newcommand{\Astdis}{-\frac{1}{2}}
\newcommand{\Astdise}{-\frac{11}{10}}
\newcommand{\AoLtdkz}{\mathbf{13+2(\frac{77}{48})}}
\newcommand{\AoLtdko}{-7+\frac{-75\sqrt{5}+125}{384}}
\newcommand{\AoLtdkt}{\frac{15\sqrt{5}-25}{8}+\frac{35}{48}}
\newcommand{\AoLtdkth}{\frac{-165\sqrt{5}+275}{384}}
\newcommand{\AoLtdiz}{\mathbf{4.8+\frac{5}{24}}}
\newcommand{\AoLtdif}{\frac{11}{24}}
\newcommand{\AoLtdis}{-\frac{5}{2}}
\newcommand{\AoLtdise}{-\frac{8}{5}}
\newcommand{\tabletextwidtho}{1}
\begin{document}

\title{On the  monotonicity of $Q^3$ spectral element method for  Laplacian}


\titlerunning{Monotonicity of $Q^3$ spectral element method for Laplacian}        

\author{Logan J. Cross       \and
        Xiangxiong Zhang  
}

\authorrunning{L. Cross and X. Zhang} 

\institute{L. J. Cross  \at \email{logancross68@gmail.com}          
           \and
          Corresponding author: X. Zhang (ORCID 0000-0002-1090-7189) \at
              \email{zhan1966@purdue.edu}\\
              Department of Mathematics,\\
Purdue University,\\
150 N. University Street,\\
West Lafayette, IN 47907-2067, USA.\\
            }
 

\maketitle

\begin{abstract}
The monotonicity of discrete Laplacian, i.e., inverse positivity of stiffness matrix, implies discrete maximum principle, which is in general not true for high order accurate schemes on unstructured meshes. 
On the other hand, it is possible to construct high order accurate monotone schemes on structured meshes. 
All previously known high order accurate inverse positive schemes  are or can be regarded as fourth order accurate finite difference schemes, which is 
 either an M-matrix or a product of two M-matrices. 
For the $Q^3$ spectral element method for the two-dimensional Laplacian, we prove its stiffness matrix is a product of four M-matrices thus it is unconditionally monotone. Such a scheme can be regarded as a fifth order accurate finite difference scheme.  
Numerical tests suggest that the unconditional monotonicity of $Q^k$ spectral element methods will be lost for $k\geq 9$ in two dimensions, and for $k\geq 4$ in three dimensions. In other words, for obtaining a high order monotone scheme, only $Q^2$ and $Q^3$ spectral element methods can be unconditionally monotone in three dimensions. 
\end{abstract}

\noindent {\bf AMS subject classifications}: 65N30, 65N06, 65N12

\noindent {\bf Key words}: Inverse positivity, discrete maximum principle, high order accuracy, monotonicity,
discrete Laplacian,   spectral element method

\vspace{-0.5cm}


\section{Introduction}
\subsection{Monotone high order schemes}
In many applications, monotone discrete Laplacian operators are desired and useful for ensuring stability such as discrete maximum principle \cite{ciarlet1970discrete} or positivity-preserving of physically positive quantities \cite{shen2021discrete, hu2021positivity, liu2022structure}. Let $\Delta_h$ denote the matrix representation of a discrete Laplacian operator, then it is called {\it monotone} if $(-\Delta_h)^{-1}\geq 0$, i.e., the matrix $(-\Delta_h)^{-1}$ has nonnegative entries. In this paper, all inequalities for matrices are entry-wise inequalities. It is well known that the simplest second order accurate centered finite difference scheme $u''(x_i)\approx \frac{u(x_{i-1})-2u(x_i)+u(x_{i+1})}{\Delta x^2}$ is monotone because the corresponding matrix  $-\Delta_h$ is an M-matrix thus inverse positive. The most general extension of this result is to state that a  linear finite element method with special implementation under a mild mesh constraint forms an M-matrix   thus monotone on unstructured triangular meshes \cite{xu1999monotone}. 

 The discrete maximum principle is not true for high order finite element methods  on unstructured meshes \cite{hohn1981some}. On structured meshes, there exist a few high order accurate inverse positive finite difference schemes. 
To the best of our knowledge, the following schemes  for solving a two-dimensional Poisson equation are the only ones proven to be monotone  beyond the second order accuracy, and all of them can be regarded as finite difference schemes with four order accuracy for function values for solving elliptic and parabolic equations:
\begin{enumerate}
 \item  The classical 9-point scheme \cite{ krylov1958approximate, collatz1960, bramble1962formulation} 
 are monotone because the stiffness matrix is an M-matrix.  
 \item
In \cite{bramble1964finite,bramble1964new},  a fourth order accurate finite difference scheme
was constructed. The stiffness matrix is a product of two M-matrices thus monotone. 

\item The Lagrangian $P^2$  finite element method  on a regular triangular mesh \cite{whiteman1975lagrangian} has a monotone stiffness matrix \cite{lorenz1977inversmonotonie}. On an equilateral triangular mesh, the discrete maximum principle can also be proven \cite{hohn1981some}. It can be regarded as a finite difference scheme  at vertices and edge centers,
on which superconvergence of fourth order accuracy holds. 
\item Monotonicity was  proven for the $Q^2$ spectral element method on an uniform rectangular mesh  for a variable coefficient Poisson equation under suitable mesh constraints \cite{li2019monotonicity}. This scheme can be regarded as a fourth order accurate finite difference scheme \cite{li2019fourth, MR4378595}.
See also \cite{zhang2024monotonicity} for extensions to quasi-uniform meshes. 
\end{enumerate}

For solving $-\Delta u=f$ with homogeneous Dirichlet boundary condition on a rectangular domain, all schemes above can be written in the form $S\mathbf u=M\mathbf f$ with stiffness matrix $S^{-1}\geq 0$ and mass matrix $M\geq 0$,  thus $(-\Delta_h)^{-1}=S^{-1}M\geq 0$. 
The last two methods are finite difference schemes constructed from the variational formulation, thus they do not suffer from the drawbacks of the first two conventional finite difference schemes, such as
loss of accuracy on quasi-uniform meshes,  difficulty with other types of boundary conditions such as Neumann boundary, etc. 

\subsection{Monotonicity of $Q^k$ spectral element method}

The Lagrangian $Q^k$ continuous finite element
method on rectangular meshes implemented by $(k+1)$-point Gauss-Lobatto quadrature is often referred to as the spectral element method in the literature \cite{maday1990optimal}, which has been a very popular high order accurate method for more than three decades for various second order equations such as the wave equations \cite{cohen2001higher}. 
In this paper we are interested in the monotonicity of the $Q^k$ spectral element method for solving the Poisson equation $-\Delta u=f$. For a one-dimensional problem, the stiffness matrix in $Q^k$ spectral element method reduces to the stiffness matrix of the $P^k$ finite element method without any quadrature, for which the discrete maximum principle for arbitrary $k$ can be proven by discrete Green's function \cite{vejchodsky2007discrete}. For two-dimensional problems, $Q^2$ spectral element method was proven monotone in \cite{li2019monotonicity}. 

The $P^k$ finite element method with $(k+1)$-point Gauss-Lobatto quadrature for a one-dimensional problem $-u''(x)=f$ can be equivalently written as a finite difference scheme at all  Gauss-Lobatto quadrature points \cite{li2019fourth}, and for homogeneous Dirichlet boundary its matrix-vector form can be written as 
$S\mathbf u=M \mathbf f$, where $\mathbf u$ and $\mathbf f$ are vectors of function point values, 
$M$ is the lumped mass matrix, $S$ is the stiffness matrix.   The stiffness matrix of $Q^k$ spectral element method for $-u_{xx}-u_{yy}=f$ on a rectangular mesh with homogeneous Dirichlet boundary can be written as $S\otimes M+M\otimes S$. The result in  \cite{vejchodsky2007discrete} implies $S^{-1}M\geq 0$ for arbitrary polynomial degree $k$ on a uniform mesh in one dimension, thus it might seem natural to conjecture that  monotonicity $S\otimes M+M\otimes S$ holds also for arbitrary polynomial degree $k$ on uniform rectangular meshes in two dimensions.
However,  the monotonicity $(S\otimes M+M\otimes S)^{-1}\geq 0$ is simply not true for $Q^k$ element for $k\geq 9$, as shown  by numerical tests in Section \ref{sec:5}. 

Thus an interesting question is whether $Q^k$ spectral element method is monotone for 2D Laplacian. The $Q^2$ case was proven 
in \cite{li2019monotonicity}. In this paper, we will prove the monotonicity of the $Q^3$ case.  In two dimensions, the cases for $Q^k$ with $4\leq k\leq 8$ remain open. 

For 3D Laplacian, the numerical tests in Section \ref{sec:5} suggest that $Q^k$ spectral element method with $k\geq 4$ cannot be uncoditionally monotone. In other words, the stiffness matrix in $Q^k$ spectral element method for $-u_{xx}-u_{yy}-u_{zz}$  is
$$S\otimes M\otimes M+M\otimes S\times M+M\otimes M\times S,$$
which is no longer monotone  
when $k\geq 4$ as suggested by numerical tests. Notice that the proof of the monotonicity of $Q^2$ spectral element method for 2D Laplacian 
in \cite{li2019monotonicity} can be easily extended to the three dimensional case. Thus from this perspective, it is also interesting to study $Q^3$ spectral element method. 
\subsection{Contribution and organization of the paper}

For proving inverse positivity, the main viable tool in the literature is to use M-matrices which are inverse positive.
A convenient sufficient condition of M-matrices is to require all off-diagonal entries to be non-positive. Except   the fourth order compact finite difference,
all high order accurate schemes induce positive off-diagonal entries, destroying such a structure, which is a major challenge of proving monotonicity. In  \cite{bramble1964new} and \cite{bohl1979inverse}, and also the appendix  in \cite{li2019monotonicity}, M-matrix factorizations of the form $(-\Delta_h)^{-1}=M_1M_2$ were shown for special high order schemes but these M-matrix factorizations seem ad hoc and do not apply to other schemes or other equations. 
  In \cite{lorenz1977inversmonotonie},
Lorenz proposed some matrix entry-wise inequality for ensuring a matrix to be a product of two M-matrices and applied it to $P^2$ finite element method on uniform regular triangular meshes.  
In  \cite{li2019monotonicity}, Lorenz's condition was applied to $Q^2$ spectral element method  on uniform meshes.  See extensions to quasi-uniform meshes in \cite{zhang2024monotonicity}.

For $Q^k$ spectral element method with $k\geq 3$, it does not seem possible to apply Lorenz's condition directly. Instead, we will demonstrate that Lorenz's condition can be applied to a few auxiliary matrices to establish the monotonicity in $Q^3$ spectral element method, which can be regarded a fifth order accurate finite difference scheme \cite{li2019fourth, MR4378595}. 
To the best of our knowledge, this is the first time that monotonicity can be proven for a fifth order accurate scheme in two dimensions.   
We are able to show the fifth order $Q^3$ spectral element on a uniform mesh in two dimensions can be factored into a product of four M-matrices, whereas existing M-matrix factorizations for high order schemes involved products of only two M-matrices.  

The rest of the paper is organized as follows. In Section \ref{sec:2}, we briefly review the conventional monotone high order finite difference schemes. In Section \ref{sec:3}, we review the monotone $P^2$ and $Q^2$ finite element methods in their equivalent finite difference forms, which are fourth order accurate in an {\it a priori} error estimate of function values at finite difference grid points for a smooth solution. 
In Section \ref{sec-lorenz}, we review the Lorenz's condition for proving monotonicity.
In Section \ref{sec-q3}, we prove the monotonicity of $Q^3$  spectral element scheme on a uniform mesh. 
Some numerical tests of these schemes  are given in Section \ref{sec:5}. Section \ref{sec:6} are concluding remarks.

 \section{Classical monotone high order finite difference schemes}
 \label{sec:2} 
 \subsection{9-point scheme}
 The 9-point scheme was somewhat suggested already in 1940s \cite{fox1947some} and discussed in details in \cite{collatz1960, krylov1958approximate}.
 It can be extended to higher dimensions
  \cite{ bramble1962formulation, bramble1963fourth}.

Consider solving the two-dimensional Poisson equations $-u_{xx}-u_{yy}=f$ with homogeneous Dirichlet boundary conditions on 
a rectangular domain $\Omega{} = (0,1) \times{} (0,1)$.  
Let $u_{i,j}$ denote the numerical solutions at a uniform grid $(x_i, y_j)=(\frac{i}{Nx}, \frac{j}{Ny})$, and $f_{i,j}=f(x_i, y_j)$. 
 For convenience, we introduce two matrices,
\[U=\begin{pmatrix*}[l]
  u_{i-1, j+1} &u_{i, j+1} &u_{i+1, j+1} \\
  u_{i-1, j} &u_{i, j} &u_{i+1, j} \\
  u_{i-1, j-1} &u_{i, j-1} &u_{i+1, j-1}
 \end{pmatrix*}, \quad F=\begin{pmatrix*}[l]
  f_{i-1, j+1} &f_{i, j+1} &f_{i+1, j+1} \\
  f_{i-1, j} &f_{i, j} &f_{i+1, j} \\
  f_{i-1, j-1} &f_{i, j-1} &f_{i+1, j-1}
 \end{pmatrix*}.\]
  Then the 9-point discrete Laplacian  for the Poisson equation  at a grid point $(x_i, y_j)$ can be written as
\begin{equation}
\frac{1}{12\Delta x^2}\begin{pmatrix}
                       -1 & 2 & -1\\
                       -10 & 20 & -10\\
                       -1 & 2 & -1 \end{pmatrix} : U
           +\frac{1}{12\Delta y^2}\begin{pmatrix}
                       -1 & -10 & -1\\
                       2 & 20 & 2\\
                       -1 & -10 & -1 \end{pmatrix} : U  
   =
\frac{1}{12}
 \begin{pmatrix}
                       0 & 1 & 0\\
                       1 & 8 & 1\\
                       0 & 1 & 0 \end{pmatrix}:
 F. 
\label{poisson-scheme3} 
\end{equation}
where $:$ denotes the sum of all entry-wise products in two matrices of the same size.
Under the assumption $\Delta x=\Delta y=h$,
it reduces to the following: \begin{equation}
\frac{1}{6h^2} \begin{pmatrix}
                       -1 & -4 & -1\\
                       -4 & 20 & -4\\
                       -1 & -4 & -1 \end{pmatrix}  
:  U =
 \frac{1}{12}
 \begin{pmatrix}
                       0 & 1 & 0\\
                       1 & 8 & 1\\
                       0 & 1 & 0 \end{pmatrix}:
 F. 
 \label{9point1}
\end{equation}

 The 9-point scheme can also be regarded as a compact finite difference scheme \cite{fornberg2015primer}. There can exist a few or many different compact finite difference approximations of the same order \cite{lele1992compact}. For instance, with the fourth order compact finite difference approximation to Laplacian used in \cite{li2018high, li2023-compact}, we   get the following scheme:
 \begin{equation}\resizebox{\textwidth}{!} 
{$ \frac{1}{12\Delta x^2}\begin{pmatrix}
                       -1 & 2 & -1\\
                       -10 & 20 & -10\\
                       -1 & 2 & -1 \end{pmatrix} : U
           +\frac{1}{12\Delta y^2}\begin{pmatrix}
                       -1 & -10 & -1\\
                       2 & 20 & 2\\
                       -1 & -10 & -1 \end{pmatrix} : U  
  =
 \frac{1}{144}
 \begin{pmatrix}
                       1 & 10 & 1\\
                       10 & 100 & 10\\
                       1 & 10 & 1 \end{pmatrix}
                       :
  F.$}
\label{poisson-scheme2} 
\end{equation}
 Both schemes \eqref{poisson-scheme3}  and \eqref{poisson-scheme2} are fourth order accurate and they have the same stencil and the same stiffness matrix in the left hand side. We have not observed any significant difference in numerical performances between these two schemes.

   \begin{rmk}
   \label{remark-laplace}
For solving 2D Laplace equation $-\Delta u=0$ with Dirichlet boundary conditions,  the 9-point scheme becomes sixth order accurate 
  \cite{fornberg2015primer}.
 \end{rmk}

 Nonsingular M-matrices are inverse-positive matrices. There are many equivalent definitions or characterizations of M-matrices, see 
\cite{plemmons1977m}. 
The following is a convenient sufficient but not necessary characterization of nonsingular M-matrices \cite{li2019monotonicity}:
\begin{theorem}
\label{rowsumcondition-thm}
For a real square matrix $A$  with positive diagonal entries and non-positive off-diagonal entries, $A$ is a nonsingular M-matrix if  all the row sums of $A$ are non-negative and at least one row sum is positive. 
\end{theorem}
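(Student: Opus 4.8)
The plan is to exhibit $A$ as a shifted non-negative matrix and reduce the whole statement to a single spectral-radius inequality. Since $A$ has positive diagonal entries and non-positive off-diagonal entries, it is a Z-matrix: setting $s=\max_i a_{ii}>0$ and $B=sI-A$, every entry of $B$ is non-negative, because $B_{ii}=s-a_{ii}\ge 0$ and $B_{ij}=-a_{ij}\ge 0$ for $i\ne j$. I would then invoke the standard characterization of nonsingular M-matrices (see \cite{plemmons1977m}): a Z-matrix written as $A=sI-B$ with $B\ge 0$ is a nonsingular M-matrix if and only if $\rho(B)<s$, in which case $A^{-1}=s^{-1}\sum_{k\ge 0}(s^{-1}B)^{k}\ge 0$ as a convergent Neumann series all of whose terms are non-negative. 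The entire theorem thus reduces to proving $\rho(B)<s$.

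Next I would translate the row-sum hypotheses. Let $r_i=\sum_j a_{ij}\ge 0$ denote the $i$-th row sum of $A$; then the $i$-th row sum of $B$ is $\sum_j B_{ij}=s-r_i$. The elementary bound $\rho(B)\le\max_i\sum_j B_{ij}=s-\min_i r_i\le s$ follows immediately from Gershgorin's theorem (or from Perron--Frobenius). If every $r_i>0$, then $\max_i\sum_j B_{ij}=s-\min_i r_i<s$ and the desired strict inequality $\rho(B)<s$ is already in hand with no further work.

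The main obstacle is the borderline case in which some rows of $A$ have zero sum while, by hypothesis, at least one row sum is positive: here the crude bound gives only $\rho(B)\le s$, and I must upgrade it to $\rho(B)<s$. This is precisely where the assumption that one row sum is strictly positive is indispensable, and carrying it out forces the use of irreducibility (connectedness) of the matrix, which holds for the stiffness matrices to which this theorem is applied. The key fact is the Perron--Frobenius theorem for irreducible non-negative matrices: the spectral radius is strictly smaller than the largest row sum whenever the row sums are not all equal. Since one $r_i>0$ makes the row sums $s-r_i$ of $B$ non-constant with maximal value $s$, this yields $\rho(B)<s$. Concretely I would argue by contradiction: if $\rho(B)=s$, the strictly positive Perron eigenvector $v$ satisfies $Bv=sv$, and evaluating this identity at an index where $v$ attains its maximum, then propagating the forced equalities along the irreducible coupling until a strictly dominant row ($r_k>0$) is reached, produces a contradiction (this is Taussky's theorem on irreducibly diagonally dominant matrices). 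I expect the only delicate bookkeeping to be this propagation step; once $\rho(B)<s$ is secured, nonsingularity and $A^{-1}\ge 0$ follow at once from the Neumann series above.
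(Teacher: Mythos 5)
Your argument is mathematically sound, but it does not (and cannot) prove the theorem as literally stated, because the statement is false without an irreducibility hypothesis. Take
\[
A=\begin{pmatrix} 1 & -1 & 0\\ -1 & 1 & 0\\ 0 & 0 & 1\end{pmatrix}.
\]
The diagonal entries are positive, the off-diagonal entries are non-positive, the row sums are $0,0,1$ (all non-negative, one positive), yet $\det A=0$, so $A$ is not a nonsingular M-matrix. This shows that the step you flag as ``indispensable'' --- invoking irreducibility to upgrade $\rho(B)\le s$ to $\rho(B)<s$ in the borderline rows --- is not a technical convenience of your particular route: no proof can avoid adding a connectivity hypothesis, because the strictly dominant row must be reachable in the digraph of $A$ from every weakly dominant row. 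The correct general statement is the one for weakly chained diagonally dominant Z-matrices, of which the irreducible case you treat is a special instance.

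With irreducibility added, your proof is correct and complete: the reduction $A=sI-B$ with $B\ge 0$, the identification of the row sums of $B$ as $s-r_i$, the bound $\rho(B)\le s-\min_i r_i$, and the Taussky-type propagation of equalities along the Perron eigenvector are all standard and correctly executed. The paper itself offers no proof of this theorem --- it is quoted from \cite{li2019monotonicity} --- so there is no internal argument to compare against line by line; the classical proofs in the literature (Taussky's theorem on irreducibly diagonally dominant matrices) follow exactly the Perron--Frobenius route you describe. Since the stiffness matrices to which the theorem is applied here are irreducible, your observation repairs rather than contradicts the way the result is used; but the irreducibility (or weak chaining) should appear explicitly as a hypothesis, not merely as a remark inside the proof.
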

 By condition $K_{35}$ in \cite{plemmons1977m}, a sufficient and necessary characterization  is,
 \begin{theorem}
\label{rowsumcondition-thm2}
 For a real square matrix $A$  with positive diagonal entries and non-positive off-diagonal entries, $A$ is a nonsingular M-matrix if  and only if that there exists a positive diagonal matrix 
 $D$ such that $AD$ has all
positive row sums.
 \end{theorem}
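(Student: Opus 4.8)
The plan is to reformulate the diagonal-scaling condition as a positive-vector condition and then treat the two implications separately, leaning on the inverse positivity of nonsingular M-matrices and on the already-established Theorem~\ref{rowsumcondition-thm}.

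First I would observe that for a positive diagonal matrix $D=\diag(d_1,\dots,d_n)$ the product $AD$ has entries $(AD)_{ij}=a_{ij}d_j$, so $AD$ again has positive diagonal and non-positive off-diagonal entries (it is a Z-matrix), and its $i$-th row sum equals $(A\mathbf{v})_i$ where $\mathbf{v}=(d_1,\dots,d_n)^\top>0$. Hence the existence of a positive diagonal $D$ making all row sums of $AD$ positive is exactly the existence of a strictly positive vector $\mathbf{v}$ with $A\mathbf{v}>0$; the correspondence $D\leftrightarrow\mathbf{v}=D\mathbf{1}$ is a bijection between positive diagonal matrices and strictly positive vectors.

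For necessity, suppose $A$ is a nonsingular M-matrix, so that $A^{-1}\ge 0$. Setting $\mathbf{v}=A^{-1}\mathbf{1}$, each component is $v_i=\sum_j (A^{-1})_{ij}\ge 0$; since $A^{-1}$ is nonsingular it has no zero row, so in fact $v_i>0$ and $\mathbf{v}>0$. Taking $D=\diag(\mathbf{v})$ then gives $AD\mathbf{1}=A\mathbf{v}=\mathbf{1}>0$, i.e.\ $AD$ has all positive row sums. For sufficiency, suppose such a $D$ exists and set $B=AD$. By the first paragraph $B$ is a Z-matrix with all row sums positive, so Theorem~\ref{rowsumcondition-thm} applies and shows $B$ is a nonsingular M-matrix; in particular $B^{-1}\ge 0$. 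Then $A=BD^{-1}$ is nonsingular and $A^{-1}=DB^{-1}\ge 0$, because $D$ is a nonnegative diagonal matrix and $B^{-1}\ge 0$. A matrix with non-positive off-diagonal entries whose inverse is nonnegative is a nonsingular M-matrix (one of the standard equivalent characterizations in \cite{plemmons1977m}), which completes this direction.

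The only delicate point is the strictness $\mathbf{v}>0$ in the necessity direction: one must use that nonsingularity of $A$ forbids a zero row in $A^{-1}$, otherwise one would obtain only $\mathbf{v}\ge 0$ and a merely nonnegative, possibly singular, diagonal matrix. Everything else is bookkeeping built on the inverse positivity of nonsingular M-matrices and on Theorem~\ref{rowsumcondition-thm}.
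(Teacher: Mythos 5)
Your proof is correct. Note that the paper itself does not prove this statement at all: it simply cites it as condition $K_{35}$ in \cite{plemmons1977m}, so any proof you give is necessarily a different route. Your argument is a clean, self-contained derivation: the translation of the diagonal-scaling condition into the existence of a strictly positive vector $\mathbf{v}$ with $A\mathbf{v}>0$ is exactly right, the choice $\mathbf{v}=A^{-1}\mathbf{1}$ for necessity works (and you correctly flag the one delicate point, that a nonnegative nonsingular matrix has no zero row, so $\mathbf{v}>0$ strictly), and the sufficiency direction via $B=AD$ and $A^{-1}=DB^{-1}\ge 0$ is standard. One small observation that strengthens your write-up: in the sufficiency direction you invoke Theorem~\ref{rowsumcondition-thm}, but you only need it in the case where \emph{all} row sums of $B$ are positive, which is precisely strict row diagonal dominance of a Z-matrix with positive diagonal; that special case is elementary and avoids any reliance on the more delicate ``at least one row sum positive'' version of Theorem~\ref{rowsumcondition-thm}. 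So your argument buys a short, essentially elementary proof of a result the paper leaves to the literature.
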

\begin{rmk}
Non-negative row sum is not a necessary condition for M-matrices. For instance, the following matrix $A$ is an M-matrix by Theorem \ref{rowsumcondition-thm2}:
  $$A = \begin{pmatrix}
                       10  & 0 & 0\\
                       -10 & 2 & -10\\
                       0   & 0  & 10 \end{pmatrix} 
,D =
 \begin{pmatrix}
                     0.1 &   0 & 0\\
                       0 & 2   & 0\\
                       0 &   0 & 0.1 \end{pmatrix}
,AD =
 \begin{pmatrix}
                     1   &   0 & 0\\
                      -1 &  4  & -1\\
                       0 &   0 & 1 \end{pmatrix}.
$$
\end{rmk}

The stiffness matrix in the scheme \eqref{9point1} has diagonal entries $\frac{20}{6h^2}$ and offdiagonal entries $-\frac{1}{6h^2}$, $-\frac{4}{6h^2}$ and $0$, 
thus by Theorem \ref{rowsumcondition-thm} it is an M-matrix and the scheme is monotone. In order for the stiffness matrix in \eqref{poisson-scheme3} and \eqref{poisson-scheme2} to be an M-matrix, we need all the off-diagonal entries to be nonnegative, which is true under the mesh constraints 
$ \frac{1}{ \sqrt 5}\leq \frac{\Delta x}{\Delta y}\leq \sqrt 5.$

 \subsection{The Bramble and Hubbard's scheme}
 
 In \cite{bramble1964new}, a fourth order accurate monotone scheme was constructed. Consider solving a one-dimensional problem 
 \begin{equation}
 -u''=f ,\quad x\in[0,1],\quad u(0)=\sigma_0, u(1)=\sigma_1,
 \label{problem-1d}
 \end{equation} on a uniform grid $x_i=\frac{i}{n+1}$ ($i=0,1,\cdots, n+1$). The scheme can be written as
 \[ \frac{-\sigma_0+2u_1 -u_2}{\Delta x^2}=f_1, \quad \frac{-u_{n-1}+2u_n -\sigma_1}{\Delta x^2}=f_n\]  
 \[\frac{\frac{1}{12}u_{i-2}-\frac43 u_{i-1}+\frac52 u_i -\frac43 u_{i+1}+\frac{1}{12}u_{i+2}}{\Delta x^2}=f_i, \quad i=2,3,\cdots, n-1. \]
 The matrix vector form of the scheme is $\frac{1}{\Delta x^2} H\mathbf u=\tilde { \mathbf f}$ where
 \[ \resizebox{\textwidth}{!} 
{$ H=\begin{pmatrix}
              2 & -1 & & & &&\\
              -\frac43 & \frac52 &-\frac43 &\frac{1}{12} & &\\      
             \frac{1}{12}& -\frac43 & \frac52 &-\frac43 &\frac{1}{12} & \\        
              & \ddots &  \ddots &  \ddots &  \ddots&  \ddots& &\\
            & & \frac{1}{12}& -\frac43 & \frac52 &-\frac43 &\frac{1}{12}  \\       
             & & & \frac{1}{12} & -\frac43 & \frac52 &-\frac43    \\
            &   & & & & -1 & 2 \\
             \end{pmatrix}, \mathbf u=\begin{pmatrix} u_1 \\ u_2 \\ \\ \vdots\\ \\ u_{n-1}\\ u_n\end{pmatrix}, \tilde { \mathbf f}=\begin{pmatrix} f_1\\ f_2   & \\ \\ \vdots\\ \\ f_{n-1}  \\ f_n \end{pmatrix}+ \begin{pmatrix} \frac{\sigma_0}{\Delta x^2} \\  -\frac{\sigma_0}{12\Delta x^2} & \\ \\ \mathbf 0 \\ \\  -\frac{\sigma_1}{12\Delta x^2} \\ \frac{\sigma_1}{\Delta x^2} \end{pmatrix}.$}\]
 For two-dimensional Laplacian, the scheme is defined similarly. In particular, assume $\Delta x=\Delta y=h$ for a square domain, the stiffness matrix can be written as $\frac{1}{h^2}(H\otimes   I+ I\otimes H)$ where $I$ is the identity matrix and $\otimes$ is the Kronecker product. 
 Its monotonicity was proven in \cite{bramble1964new}. 
 
 \section{Monotone high order finite element methods on structured meshes}
 \label{sec:3}
 It is well-known that finite element methods with suitable quadrature are equivalent to finite difference schemes. The schemes in this section are equivalent to finite difference schemes defined at quadrature points. 
 \subsection{Finite element method with the simplest quadrature}
 Consider an elliptic equation on $\Omega=(0,1)\times(0,1)$ with 
Dirichlet boundary conditions:
\begin{equation}
\label{pde-3}
 \mathcal L u\equiv -\nabla\cdot(a \nabla u)+cu =f  \quad \mbox{on} \quad \Omega, \quad
 u=g \quad \mbox{on}\quad  \partial\Omega. 
\end{equation}
Assume there is a function $\bar g \in H^1(\Omega)$ as an extension of $g$ so that $\bar g|_{\partial \Omega} = g$.
 The  variational form  of \eqref{pde-3} is to find $\tilde u = u - \bar g \in H_0^1(\Omega)$ satisfying
\begin{equation}\label{nonhom-var}
 \mathcal A(\tilde u, v)=(f,v) - \mathcal A(\bar g,v) ,\quad \forall v\in H_0^1(\Omega),
 \end{equation}
 where $\mathcal A(u,v)=\iint_{\Omega} a \nabla u \cdot \nabla v dx dy+\iint_{\Omega} c u v dx dy$, $ (f,v)=\iint_{\Omega}fv dxdy.$

    \begin{figure}[htbp]
 \subfigure[The quadrature points and a finite element mesh for $P^2$]{\includegraphics[scale=0.8]{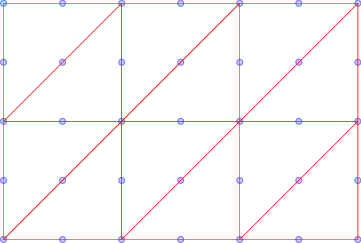} }
 \hspace{.6in}
 \subfigure[The corresponding finite difference grid]{\includegraphics[scale=0.8]{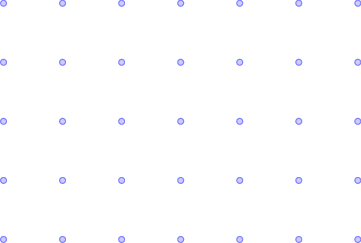}}
\caption{An illustration of Lagrangian $P^2$ element and the simple third order accurate quadrature using vertices and edge centers. }
\label{mesh3}
 \end{figure}
 
   \begin{figure}[h]
 \subfigure[The  quadrature points and a finite element mesh]{\includegraphics[scale=0.8]{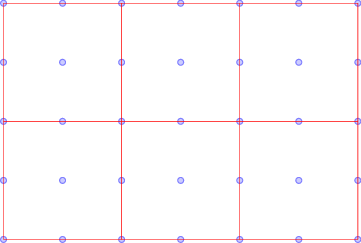} }
 \hspace{.6in}
 \subfigure[The corresponding finite difference grid]{\includegraphics[scale=0.8]{mesh1}}
\caption{An illustration of Lagrangian $Q^2$ element and the $3\times3$ Gauss-Lobatto quadrature. }
\label{mesh-Q2}
 \end{figure}

 Let $h$ be quadrature point spacing of  a regular triangular mesh shown in Figure \ref{mesh3} (or a  rectangular mesh shown in Figure \ref{mesh-Q2}) and  $V_0^h\subseteq H^1_0(\Omega)$ be the continuous finite element space consisting of piecewise $P^2$ polynomials (or $Q^2$ polynomials), then the most convenient implementation of  finite element method is to use the simple quadrature consisting of vertices and edge centers with equal weights (or $3\times3$ Gauss-Lobatto quadrature rule) for all the integrals,  see  Figure \ref{mesh3} for $P^2$ method (or Figure \ref{mesh-Q2} for $Q^2$ method). 
 Such a numerical scheme can be defined as:  find $ u_h \in V_0^h$ satisfying
\begin{equation}\label{nonhom-var-num3}
\mathcal A_h( u_h, v_h)=\langle f,v_h \rangle_h - \mathcal A_h( g_I,v_h) ,\quad \forall v_h\in V_0^h,
\end{equation}
where   $\mathcal A_h(u_h,v_h)$  and $\langle f,v_h\rangle_h$ denote using simple quadrature for integrals $\mathcal A(u_h,v_h)$ and $(f,v_h)$ respectively, and $g_I$ is the piecewise $P^2$ (or $Q^2$) Lagrangian interpolation polynomial at the quadrature points shown in 
 Figure \ref{mesh3} for $P^2$ method (or Figure \ref{mesh-Q2} for $Q^2$ method) of the following function:
\[g(x,y)=\begin{cases}
   0,& \mbox{if}\quad (x,y)\in (0,1)\times(0,1),\\
   g(x,y),& \mbox{if}\quad (x,y)\in \partial\Omega.\\
  \end{cases}
\] 

Then $\bar u_h =   u_h + g_I$ is the numerical solution for the problem \eqref{pde-3}.
Notice that \eqref{nonhom-var-num3} is not a straightforward approximation to \eqref{nonhom-var} since $\bar g$ is never used. 
When the numerical solution is represented by a linear combination of Lagrangian interpolation polynomials at the grid points, it can be rewritten as a finite difference scheme.
We also call it a variational difference scheme since it is derived from the
variational form. 
 
 \subsection{The $P^2$ finite element method}
For Laplacian $\mathcal L u=-\Delta u$, the scheme \eqref{nonhom-var-num3} on a uniform regular triangular mesh can be given in
a finite difference form \cite{whiteman1975lagrangian}:
 
\begin{subequations}
 \label{p2-scheme}
\begin{equation}
\label{fd_edge}
\begin{gathered}
 \frac{1}{h^2}\begin{pmatrix}
                       0 & -1 & 0\\
                       -1 & 4 & -1\\
                       0 & -1 & 0 \end{pmatrix} : U
  = f_{i,j},
  \quad
   \text{if $(x_i,y_j)$ is an edge center;}
   \end{gathered}
\end{equation}

\begin{equation} \label{fd_corner}
\begin{gathered}
 \frac{1}{9h^2}\begin{pmatrix}
                       1 & -4 & 1\\
                       -4 & 12 & -4\\
                       1 & -4 & 1 \end{pmatrix} : U
  = 0, \quad
    \text{if $(x_i,y_j)$ is a vertex.} 
\end{gathered}
\end{equation}
\end{subequations}

Notice that the stiffness matrix is not an M-matrix due to the positive off-diagonal entries in \eqref{fd_corner} and its inverse positivity was proven in \cite{lorenz1977inversmonotonie}. 

Since the  simple quadrature  is exact for integrating only quadratic polynomials on triangles, it is not obvious why
the finite  difference scheme  \eqref{p2-scheme} is fourth order accurate. With such a quadrature on two adjacent triangles forming a rectangle in a regular triangular mesh, we obtain a quadrature on the rectangle, see Figure \ref{p2_quad}.
  For a reference square $[-1,1]\times[-1,1]$, the quadrature weights are $\frac23$ and $\frac43$ for an edge center and the cell center respectively.

    \begin{figure}[h]
    \centering
\includegraphics[scale=1]{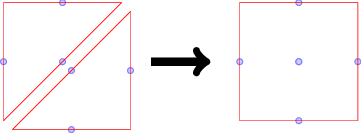}
 \caption{The simple quadrature on two triangles give a quadrature on a square.}
 \label{p2_quad}
 \end{figure}

 \begin{lemma}
 The quadrature  on a square $[-1,1]\times[-1,1]$ using only four edge centers  with weight $\frac23$  and one cell center with weight $\frac43$ is exact for $P^3$ polynomials.
 \end{lemma}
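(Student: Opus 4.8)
The plan is to verify exactness directly on the monomial basis of $P^3$, exploiting the symmetry of the node set. Since both the exact integral $I(g)=\iint_{[-1,1]^2} g\,dx\,dy$ and the quadrature functional
\[
Q(g)=\tfrac{2}{3}\bigl[g(1,0)+g(-1,0)+g(0,1)+g(0,-1)\bigr]+\tfrac{4}{3}\,g(0,0)
\]
are linear on $P^3$, it suffices to check $I(g)=Q(g)$ for each of the ten monomials $1,x,y,x^2,xy,y^2,x^3,x^2y,xy^2,y^3$ that span $P^3$.

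First I would record that the node set $\{(0,0),(\pm1,0),(0,\pm1)\}$ and the domain are both invariant under the reflections $x\mapsto-x$, $y\mapsto-y$ and under the swap $x\leftrightarrow y$. For any monomial $x^ay^b$ with $a$ or $b$ odd, the integral $I(x^ay^b)$ vanishes by the corresponding reflection symmetry of the domain. I claim $Q$ annihilates the same monomials: at each edge node one coordinate is zero, so a monomial containing a positive power of both $x$ and $y$ (here $xy$, $x^2y$, $xy^2$) evaluates to zero at every node; while for the pure odd powers $x$, $x^3$ (and, by the swap symmetry, $y$, $y^3$) the two nonzero nodal contributions at $(\pm1,0)$ cancel. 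This settles the seven monomials $x,y,xy,x^3,x^2y,xy^2,y^3$, for which both sides are zero.

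It then remains to treat the even monomials $1,x^2,y^2$, which I would compute directly: $I(1)=4$ matches the total weight $4\cdot\tfrac23+\tfrac43=4$; $I(x^2)=\tfrac43$ matches $Q(x^2)=\tfrac23(1+1)=\tfrac43$; and $I(y^2)=Q(y^2)=\tfrac43$ follows from the $x\leftrightarrow y$ symmetry. This exhausts the basis and proves exactness on $P^3$.

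I do not expect a genuine obstacle; the one point worth emphasizing is that a five-node rule cannot integrate the ten-dimensional space $P^3$ exactly in general, and it is precisely the symmetry of this node configuration that makes seven of the ten moment conditions hold trivially (both sides zero), leaving only the normalization $I(1)=Q(1)$ and the second-moment condition $I(x^2)=I(y^2)$ (the two coinciding by symmetry), which the weights $\tfrac23$ and $\tfrac43$ are exactly tuned to meet.
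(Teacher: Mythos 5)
Your proof is correct. It differs from the paper's in one respect: the paper does not re-verify the quadratic part at all. Instead it invokes the fact that the rule is a sum of the two simple triangle quadratures, each exact for $P^2$ on its triangle, so exactness on $P^2$ over the square is inherited; the proof then reduces to checking only the four cubic monomials $x^3$, $y^3$, $x^2y$, $xy^2$, for which both the integral and the quadrature vanish. Your argument verifies all ten monomials of $P^3$ from scratch, using the reflection and swap symmetries to dispose of the seven odd ones and computing $I(1)=Q(1)=4$ and $I(x^2)=Q(x^2)=\tfrac43$ directly. What you gain is self-containedness: the lemma as stated only specifies the node--weight data on the square, and your proof needs no reference to the triangle quadratures or to their $P^2$ exactness (which the paper asserts but does not prove in the lemma). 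What the paper's route buys is brevity and a cleaner link to the finite element construction from which the rule arises. Your closing observation --- that a five-node rule cannot integrate the ten-dimensional $P^3$ in general and that symmetry kills seven moment conditions while the weights are tuned to the remaining two independent ones --- is a nice piece of added insight not present in the paper.
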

 \begin{proof}
  Since the quadrature is exact for integrating $P^2$ polynomials on either triangle in Figure \ref{p2_quad}, it suffices to show that it is exact for integrating basis polynomials of degree three, i.e., $x^2y$, $xy^2$, $x^3$ and $y^3$. It is straightforward to verify  that both exact integrals and quadrature of these four  polynomials on the square are  zero. 
 \end{proof}
 
Therefore, with Bramble-Hilbert Lemma (see Exercise 3.1.1 and Theorem 4.1.3 in \cite{ciarlet1978finite}), we can show that the quadrature rule is fourth order accurate  if we regard the regular triangular mesh in Figure \ref{p2_quad} (a) as a rectangular mesh.

The standard $L^2(\Omega)$-norm estimate for the finite element method with quadrature  \eqref{nonhom-var-num3}  using Lagrangian $P^2$ elements is third order accurate of function value for smooth exact solutions \cite{ciarlet1978finite}. 
On the other hand, superconvergence of function values in finite element method without quadrature can  be proven  \cite{chen2001structure, wahlbin2006superconvergence}, e.g., the errors at vertices and edge centers are fourth order accurate on triangular meshes for function values if using $P^2$ basis, see also \cite{huang2008superconvergence}.  It can be  shown that
using such fourth order accurate quadrature will not affect the fourth order superconvergence even for a general variable coefficient elliptic problem, see \cite{li2019fourth}. 
Notice that the scheme can also be given on a nonuniform mesh and its fourth order accuracy still holds on a quasi uniform mesh since it is also a finite element method.

\subsection{The $Q^2$ spectral element method}
The   scheme \eqref{nonhom-var-num3} with Lagrangian $Q^2$ basis is  fourth order accurate   \cite{li2019fourth} and  monotone on a uniform mesh under suitable mesh constraints \cite{li2019monotonicity}.  

 Consider a uniform grid $(x_i,y_j)$ for a rectangular domain $[0,1]\times[0,1]$
where
$x_i = ih$, $i = 0,1,\dots, n+1$
and
$y_j = jh$, $j = 0,1,\dots, n+1$, $h=\frac{1}{n+1}$, where $n$ must be odd.
Let $u_{ij}$ denote the numerical solution at $(x_i, y_j)$.
Let $\mathbf u$ denote an abstract vector consisting of $u_{ij}$ for $i,j=1,2,\cdots,n$. Let $\bar{\mathbf u}$ denote an abstract vector consisting of $u_{ij}$ for $i,j=0,1,2,\cdots,n,n+1$.
Let $\bar{\mathbf f}$ denote an abstract vector consisting of $f_{ij}$ for $i,j=1,2,\cdots,n$ and the boundary condition $g$ at the boundary grid points. 
Then the matrix vector representation of  \eqref{nonhom-var-num3} is
 $S\bar{\mathbf u}=M\mathbf f$ where $S$ is the stiffness matrix and $M$ is the lumped mass matrix.  
For convenience,  after inverting the mass matrix, with the boundary conditions, the whole scheme can be represented in a matrix vector form 
$\bar L_h \bar{\mathbf u} =\bar{\mathbf f}$.
For Laplacian $\mathcal L u=-\Delta u$,  $\bar L_h \bar{\mathbf u} =\bar{\mathbf f}$ on a uniform mesh is given as 
\begin{equation}
\label{Q2-laplacian}
\resizebox{\textwidth}{!} 
{$ 
    \begin{gathered}
(\bar L_h \bar{\mathbf u})_{i,j}:=\frac{-u_{i-1,j}-u_{i+1,j}+4u_{i,j}-u_{i,j+1}-u_{i,j-1}}{h^2}=f_{i,j},\quad \text{if $(x_i,y_j)$ is a cell center}, \\
(\bar L_h \bar{\mathbf u})_{i,j}:=\frac{-u_{i-1,j}+2u_{i,j}-u_{i+1,j}}{h^2}+\frac{u_{i,j-2}-8u_{i,j-1}+14u_{i,j}-8u_{i,j+1}+u_{i,j+2}}{4h^2}=f_{i,j},
\\ \text{if $(x_i,y_j)$ is an edge center for an edge parallel to the y-axis,}
\\ 
(\bar L_h \bar{\mathbf u})_{i,j}:=\frac{u_{i-2,j}-8u_{i-1,j}+14u_{i,j}-8u_{i+1,j}+u_{i+2,j}}{4h^2}+\frac{-u_{i,j-1}+2u_{i,j}-u_{i,j+1}}{h^2}=f_{i,j},
\\ \text{if $(x_i,y_j)$ is an edge center for an edge parallel to the x-axis,}
\\  
(\bar L_h \bar{\mathbf u})_{i,j}:=\frac{u_{i-2,j}-8u_{i-1,j}+14u_{i,j}-8u_{i+1,j}+u_{i+2,j}}{4h^2}+\frac{u_{i,j-2}-8u_{i,j-1}+14u_{i,j}-8u_{i,j+1}+u_{i,j+2}}{4h^2}=f_{i,j}, \\
 \text{if $(x_i,y_j)$ is a knot,}\\
(\bar L_h \bar{\mathbf u})_{i,j}:=u_{i,j}=g_{i,j}\quad \text{if $(x_i,y_j)$ is a boundary point.}
    \end{gathered}$}
\end{equation}
If ignoring the denominator $h^2$, then the stencil can be represented as:
 \[ \quad \mbox{  cell center}  \begin{array}{ccc}
& -1& \\
-1 & 4 & -1\\
& -1& 
\end{array}\qquad
\mbox{knots}  \begin{array}{ccccc}
&& \frac14& &\\
&& -2& &\\
\frac14& -2 & 7 & -2 &\frac14\\
&& -2& &\\
&& \frac14& &
\end{array}
\]
\[
\mbox{edge center (edge parallel to $y$-axis)}  \begin{array}{ccccc}
&& -1& &\\
\frac14& -2 & \frac{11}{2} & -2 &\frac14\\
&& -1& &
\end{array}\]
\[
\mbox{edge center (edge parallel to $x$-axis)}  \begin{array}{ccccc}
&& \frac14& &\\
&& -2& &\\
& -1 & \frac{11}{2} & -1 &\\
&& -2& &\\
&& \frac14& &
\end{array}\]

\subsection{The $Q^3$ spectral element method}

In the scheme \eqref{nonhom-var-num3}, if using Lagrangian $Q^3$ basis with $4\times 4$ Gauss-Lobatto quadrature, we get $Q^3$ spectral element method, which is also a fifth order accurate finite difference scheme
\cite{li2019fourth}.
The 4-point Gauss-Lobatto quadrature for  the reference interval $[-1, 1]$ has four quadrature points  $[-1 \: -\frac{\sqrt{5}}{5} \: \frac{\sqrt{5}}{5} \: 1]$. Thus on an uniform rectangular mesh, the corresponding finite difference grid consisting of quadrature points is not exactly uniform, see Figure \ref{mesh-Q3}.

 \begin{figure}[htbp]
      \subfigure[ Quadrature points and a finite element mesh.]{\includegraphics[scale=0.8]{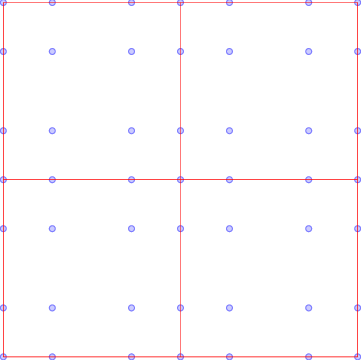} }
 \hspace{.6in}
 \subfigure[The corresponding finite difference grid. ]{\includegraphics[scale=0.8]{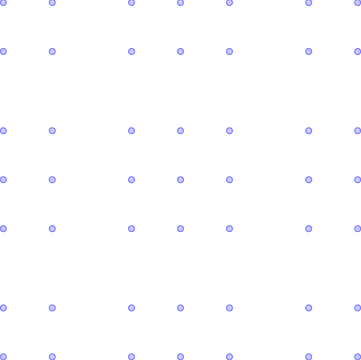}}
     \caption{An illustration of a mesh for $Q^3$ element and the $4\times4$ Gauss-Lobatto quadrature. } 
     \label{mesh-Q3}
\end{figure}

\begin{figure}[htbp]
    \label{q3_stencil_def_1D} \centering \includegraphics[scale=0.8]{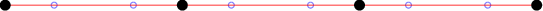}
    \caption{Three adjacent 1D cells for $P^3$ elements using 4-point Gauss-Lobatto quadrature.}
\end{figure}

Now consider a uniform mesh for a one-dimensional problem and assume each cell has length $h$,  see Figure \ref{q3_stencil_def_1D}. 
There are two quadrature points inside each interval, and we refer to them as the left interior point and the right interior point.  
The $Q^3$ spectral element method in difference form  for one-dimension problem \eqref{problem-1d} can be written as
$\bar L_h \bar{\mathbf u} =\bar{\mathbf f}$:
\begin{equation}
\label{q3-scheme-1d}
\resizebox{\textwidth}{!} 
{$
    \begin{gathered}
        ( \bar L_h \bar{\mathbf u})_i:=   \frac{4}{h^2}\left[13u_i - \frac{15\sqrt{5}+25}{8}(u_{i-1} + u_{i+1}) + \frac{15\sqrt{5}-25}{8}(u_{i-2} + u_{i+2}) - \frac{1}{4}(u_{i-3} + u_{i+3})\right]=f_i, \text{$x_i$ is a knot;}
\\ 
(\bar L_h \bar{\mathbf u})_i:=\frac{4}{h^2}\left[-\frac{3\sqrt{5}+5}{4}u_{i-1} + 5u_i + \frac{-5}{2}u_{i+1} +  \frac{15\sqrt{5}-25}{8}u_{i+2} \right]=f_i, \quad \text{ $x_i$ is the left interior point;} \\
  \\ 
  (\bar L_h \bar{\mathbf u})_i:=\frac{4}{h^2}\left[ \frac{15\sqrt{5}-25}{8}u_{i-2} - \frac{5}{2}u_{i-1} +  5u_i - \frac{3\sqrt{5}+5}{4}u_{i+1} \right]=f_i,\quad \text{if $x_i$ is the right interior point.}   \\
(\bar L_h \bar{\mathbf u})_0:=u_0=\sigma_0,\qquad (\bar L_h \bar{\mathbf u})_{n+1}:=u_{n+1}=\sigma_1.
    \end{gathered}
  $}
\end{equation}
The explicit scheme in two dimensions will be given in Section \ref{sec-q3}.

\section{Lorenz's condition for monotonicity} 
\label{sec-lorenz}
In this section, we briefly review the Lorenz's condition for monotonicity \cite{lorenz1977inversmonotonie}, which will be the main tool to prove the monotonicity of the $Q^3$ spectral element method. 
The monotonicity implies discrete maximum principle for the scheme, see \cite{ciarlet1970discrete, li2019monotonicity}.

\begin{defi}
Let $\mathcal N = \{1,2,\dots,n\}$. For $\mathcal N_1, \mathcal N_2 \subset \mathcal N$, we say a matrix $A=[a_{ij}]$ of size $n\times n$ connects $\mathcal N_1$ with $\mathcal N_2$ if 
\begin{equation}
\forall i_0 \in \mathcal N_1, \exists i_r\in \mathcal N_2, \exists i_1,\dots,i_{r-1}\in \mathcal N \quad \mbox{s.t.}\quad  a_{i_{k-1}i_k}\neq 0,\quad k=1,\cdots,r.
\label{condition-connect}
\end{equation}
If perceiving $A$ as a directed graph adjacency matrix of vertices labeled by $\mathcal N$, then \eqref{condition-connect} simply means that there exists a directed path from any vertex in $\mathcal N_1$ to at least one vertex in $\mathcal N_2$.  
In particular, if $\mathcal N_1=\emptyset$, then any matrix $A$  connects $\mathcal N_1$ with $\mathcal N_2$.
\end{defi}

Given a square matrix $A$ and a column vector $\mathbf x$, we define
\[\mathcal N^0(A\mathbf x)=\{i: (A\mathbf x)_i=0\},\quad 
\mathcal N^+(A\mathbf x)=\{i: (A\mathbf x)_i>0\}.\]

Given a matrix $A=[a_{ij}]\in \mathbbm{R}^{n\times n}$, define its diagonal, off-diagonal, positive and negative off-diagonal parts as $n\times n$ matrices $A_d$, $A_a$, $A_a^+$, $A_a^-$:
\[(A_d)_{ij}=\begin{cases}
a_{ii}, & \mbox{if} \quad i=j\\
0, & \mbox{if} \quad  i\neq j
\end{cases}, \quad A_a=A-A_d,
\]
\[(A_a^+)_{ij}=\begin{cases}
a_{ij}, & \mbox{if} \quad a_{ij}>0,\quad i\neq j\\
0, & \mbox{otherwise}.
\end{cases}, \quad A_a^-=A_a-A^+_a.
\]

The following two results were proven in  \cite{lorenz1977inversmonotonie}. See also \cite{li2019monotonicity} for a detailed proof.
\begin{theorem}\label{thm2}
If $A\leq M_1M_2\cdots M_k L$ where $M_1, \cdots, M_k$ are nonsingular M-matrices and $L_a\leq 0$,  and there exists a nonzero vector $\mathbf e\geq 0$ such that one of the matrices $M_1, \cdots, M_k , L$ connects $\mathcal N^0(A\mathbf e)$ with $\mathcal N^+(A\mathbf e)$. Then $M_k^{-1}M_{k-1}^{-1}\cdots M_1^{-1} A$ is an M-matrix, thus $A$ is a product of $k+1$ nonsingular M-matrices and $A^{-1}\geq 0$. 
\end{theorem}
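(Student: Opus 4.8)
The plan is to reduce everything to showing that the single matrix $B:=M_k^{-1}M_{k-1}^{-1}\cdots M_1^{-1}A$ is a nonsingular M-matrix. Write $P:=M_1M_2\cdots M_k$, so that $P^{-1}=M_k^{-1}\cdots M_1^{-1}$ is a product of the nonnegative inverses $M_i^{-1}$ and hence satisfies $P^{-1}\ge 0$ with strictly positive diagonal (the all-diagonal term in the product is a positive product of positive numbers, and the remaining contributions are nonnegative). Thus $B=P^{-1}A$, and once $B$ is shown to be a nonsingular M-matrix the remaining assertions are automatic: $A=M_1\cdots M_k B$ exhibits $A$ as a product of $k+1$ nonsingular M-matrices, and $A^{-1}=B^{-1}P^{-1}\ge 0$ because it is a product of nonnegative matrices.

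First I would check that $B$ is a Z-matrix and set up the accompanying vector inequality. Left-multiplying the hypothesis $A\le PL$ by the nonnegative matrix $P^{-1}$ preserves the entrywise ordering, giving $B=P^{-1}A\le P^{-1}PL=L$; hence for $i\ne j$ we have $B_{ij}\le L_{ij}\le 0$ since $L_a\le 0$, so $B_a\le 0$. Next, using $A\mathbf e\ge 0$ (which I take to be part of Lorenz's condition, since it is what makes $\mathcal N^0(A\mathbf e)$ and $\mathcal N^+(A\mathbf e)$ partition the index set), the inequality $P^{-1}\ge 0$ gives $B\mathbf e=P^{-1}(A\mathbf e)\ge 0$; and because $P^{-1}$ has positive diagonal, any index with $(A\mathbf e)_i>0$ keeps $(B\mathbf e)_i>0$, so $\mathcal N^+(A\mathbf e)\subseteq\mathcal N^+(B\mathbf e)$ and dually $\mathcal N^0(B\mathbf e)\subseteq\mathcal N^0(A\mathbf e)$. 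By Theorem \ref{rowsumcondition-thm2} it now suffices to produce a strictly positive vector $\mathbf d$ with $B\mathbf d>0$: such a $\mathbf d$ forces $B$ to have positive diagonal (the off-diagonal contributions to $(B\mathbf d)_i$ are nonpositive), so all hypotheses of Theorem \ref{rowsumcondition-thm2} hold with $D=\diag(\mathbf d)$.

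The connectivity hypothesis is what I would use to manufacture this $\mathbf d$, and I expect it to split into two mechanisms according to which factor does the connecting. If the connecting matrix is one of the inverted factors $M_\ell$, I would exploit that for a nonsingular M-matrix $(M_\ell^{-1})_{pq}>0$ precisely when $q$ is reachable from $p$ along nonzero off-diagonal entries of $M_\ell$ (the inverse has the reflexive-transitive closure of the graph as its support, via the Neumann series). Writing $\mathbf v_0=A\mathbf e$ and $\mathbf v_i=M_i^{-1}\mathbf v_{i-1}$, the positive diagonals keep $\mathcal N^+(A\mathbf e)\subseteq\mathcal N^+(\mathbf v_{\ell-1})$, so for each $i_0\in\mathcal N^0(A\mathbf e)$ a connecting path of $M_\ell$ to some $j\in\mathcal N^+(A\mathbf e)$ yields $(M_\ell^{-1})_{i_0 j}>0$ and therefore $(\mathbf v_\ell)_{i_0}>0$; the later nonnegative, positive-diagonal multiplications preserve this, so $B\mathbf e>0$ outright and $\mathbf d=\mathbf e+\varepsilon\mathbf 1$ works for small $\varepsilon$. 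If instead the connecting matrix is $L$, then $B\le L$ with $L_a\le 0$ shows that every off-diagonal nonzero (necessarily negative) entry of $L$ forces $B_{ij}\le L_{ij}<0$, so the directed graph of $B$ contains every edge of $L$, and the shrinking of the source set together with the growth of the target set established above gives that $B$ connects $\mathcal N^0(B\mathbf e)$ with $\mathcal N^+(B\mathbf e)$.

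In this second case one still has to convert connectivity of $B$ into the strictly positive vector $\mathbf d$, and this is the step I expect to be the \textbf{main obstacle}. The idea is the perturbation argument behind the core Lorenz lemma: starting from $B\mathbf e\ge 0$, add small positive mass along the connecting paths running backward from $\mathcal N^+(B\mathbf e)$ into $\mathcal N^0(B\mathbf e)$, so that each formerly-zero component of $B\mathbf e$ becomes strictly positive while the already-positive components stay positive for a small enough perturbation parameter. Carrying this out for a possibly reducible Z-matrix, choosing the increments so that $\mathbf d>0$ everywhere without spoiling $B\mathbf d>0$ on the indices that were already positive, is the delicate bookkeeping; the connectivity hypothesis is exactly what guarantees the propagation reaches every index of $\mathcal N^0(B\mathbf e)$ and hence that the construction terminates with $B\mathbf d>0$.
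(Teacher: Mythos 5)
Your overall architecture is sound and is essentially the route the paper takes (via the results it cites from Lorenz and from \cite{li2019monotonicity}): pass to $B:=M_k^{-1}\cdots M_1^{-1}A$, observe $B\le L$ so $B$ is a Z-matrix, observe $B\mathbf e=P^{-1}(A\mathbf e)\ge 0$ with $\mathcal N^+(A\mathbf e)\subseteq\mathcal N^+(B\mathbf e)$ and $\mathcal N^0(B\mathbf e)\subseteq\mathcal N^0(A\mathbf e)$, and then split on which factor does the connecting. (You correctly supply the hypothesis $A\mathbf e\ge 0$, which the theorem statement omits but which is needed and is present in \eqref{cond3}.) Your treatment of the case where some $M_\ell$ connects is complete and correct: the Neumann-series support argument gives $(M_\ell^{-1})_{i_0 j}>0$ along a connecting path, hence $B\mathbf e>0$, and $\mathbf d=\mathbf e+\varepsilon\mathbf 1$ finishes via Theorem \ref{rowsumcondition-thm2}.

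The genuine problem is the mechanism you sketch for the remaining case, where you must show that a Z-matrix $B$ with $B\mathbf e\ge 0$, $\mathbf e\ge 0$ nonzero, and $B$ connecting $\mathcal N^0(B\mathbf e)$ with $\mathcal N^+(B\mathbf e)$ is a nonsingular M-matrix (this is Theorem 6 of \cite{li2019monotonicity}, which the paper's Corollary invokes). You propose to ``add small positive mass along the connecting paths'' so that the zero components of $B\mathbf e$ become positive. This goes the wrong way: since $B_{ij}\le 0$ for $j\ne i$, increasing $d_j$ can only \emph{decrease} $(B\mathbf d)_i$ for every $i\ne j$, and increasing $d_i$ itself helps only if $B_{ii}>0$, which is not yet known for $i\in\mathcal N^0(B\mathbf e)$. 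The correct perturbation is subtractive: for $j\in\mathcal N^+(B\mathbf e)$ one has $0<(B\mathbf e)_j\le B_{jj}e_j$, hence $e_j>0$ and $B_{jj}>0$, so one may replace $e_j$ by $e_j-\delta$ with $\delta>0$ small; this raises $(B\mathbf e)_i$ by $-\delta B_{ij}>0$ for every in-neighbor $i$ of $j$ (promoting such $i$ from $\mathcal N^0$ to $\mathcal N^+$), leaves all other components of $B\mathbf e$ no smaller, and keeps $(B\mathbf e)_j>0$ and $e_j>0$ for $\delta$ small. Iterating backward along the connecting paths empties $\mathcal N^0$, producing $\mathbf e^*\ge 0$ with $B\mathbf e^*>0$; then $0<(B\mathbf e^*)_i\le B_{ii}e^*_i$ forces $\mathbf e^*>0$, and Theorem \ref{rowsumcondition-thm2} applies. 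With this sign corrected, your proof closes; as written, the described construction would fail.
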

\begin{theorem}[Lorenz's condition] \label{thm3}
If $A^-_a$ has a decomposition: $A^-_a = A^z + A^s = (a_{ij}^z) + (a_{ij}^s)$ with $A^s\leq 0$ and $A^z \leq 0$, such that 
\begin{subequations}
 \label{lorenz-condition}
\begin{align}
& A_d + A^z \textrm{ is a nonsingular M-matrix},\label{cond1}\\ 
& A^+_a \leq A^zA^{-1}_dA^s \textrm{ or equivalently } \forall a_{ij} > 0 \textrm{ with } i \neq j, a_{ij} \leq \sum_{k=1}^n a_{ik}^za_{kk}^{-1}a_{kj}^s,\label{cond2}\\
& \exists \mathbf e \in \mathbbm{R}^n\setminus\{\mathbf 0\}, \mathbf e\geq 0 \textrm{ with $A\mathbf e \geq 0$ s.t. $A^z$ or $A^s$  connects $\mathcal N^0(A\mathbf e)$ with $\mathcal N^+(A\mathbf e)$.} \label{cond3}
\end{align}
\end{subequations}
Then $A$ is a product of two nonsingular M-matrices thus $A^{-1}\geq 0$.
\end{theorem}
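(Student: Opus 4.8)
The plan is to reduce the statement to Theorem~\ref{thm2} with $k=1$ by producing an explicit nonsingular M-matrix $M_1$ together with a matrix $L$ satisfying $L_a\le 0$ and $A\le M_1L$. The decomposition $A_a^- = A^z + A^s$ dictates the natural candidates: take $M_1 = A_d + A^z$ and $L = I + A_d^{-1}A^s$. Once these factors are shown to satisfy the hypotheses of Theorem~\ref{thm2}, we immediately conclude that $M_1^{-1}A$ is a nonsingular M-matrix, so $A = M_1(M_1^{-1}A)$ is a product of two nonsingular M-matrices and $A^{-1}\ge 0$.

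First I would check the structural properties of the two factors. The matrix $M_1 = A_d + A^z$ is a nonsingular M-matrix directly by hypothesis~\eqref{cond1}; in particular its diagonal, which coincides with $A_d$ since $A^z$ has zero diagonal, has strictly positive entries, so $A_d^{-1}$ exists and is a positive diagonal matrix. Next I would verify $L_a\le 0$: because $A^s\le 0$ has zero diagonal and $A_d^{-1}$ is positive diagonal, the product $A_d^{-1}A^s$ has nonpositive off-diagonal entries and vanishing diagonal, so $L = I + A_d^{-1}A^s$ has unit diagonal and nonpositive off-diagonal part.

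The computational core is the identity $M_1L = A - A_a^+ + A^zA_d^{-1}A^s$. I would obtain it by expanding $(A_d + A^z)(I + A_d^{-1}A^s) = A_d + A^z + A^s + A^zA_d^{-1}A^s$ and recognizing, from $A = A_d + A_a^+ + A^z + A^s$, that $A_d + A^z + A^s = A - A_a^+$. Hypothesis~\eqref{cond2}, which is precisely the entrywise inequality $A_a^+ \le A^zA_d^{-1}A^s$, then gives $M_1L - A = A^zA_d^{-1}A^s - A_a^+ \ge 0$, i.e. $A\le M_1L$. Finally I would transfer the connectivity hypothesis~\eqref{cond3} to the factors: the off-diagonal nonzero pattern of $M_1$ equals that of $A^z$ (adding the diagonal block $A_d$ only creates self-loops, which cannot help a directed path reach a new vertex), and the off-diagonal pattern of $L$ equals that of $A^s$ since left-multiplication by the positive diagonal $A_d^{-1}$ preserves exactly which entries vanish. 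Thus if $A^z$ or $A^s$ connects $\mathcal N^0(A\mathbf e)$ with $\mathcal N^+(A\mathbf e)$, then so does $M_1$ or $L$ for the same nonzero $\mathbf e\ge 0$, and all hypotheses of Theorem~\ref{thm2} are in force.

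The main obstacle I anticipate is the connectivity bookkeeping rather than the algebra: one must argue carefully that the directed-graph condition genuinely passes from $A^z$ and $A^s$ to $M_1$ and $L$ despite the altered diagonal and the diagonal rescaling by $A_d^{-1}$. The identity $M_1L = A - A_a^+ + A^zA_d^{-1}A^s$, though central, is routine to verify once the factors are chosen; the subtlety lies in confirming that no nonzero off-diagonal entry is created or destroyed in passing to $M_1$ and $L$, so that the graph-theoretic connection hypothesis is preserved.
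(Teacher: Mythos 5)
Your proposal is correct and follows essentially the same route as the paper: the paper cites the references for Theorem~\ref{thm3} itself but proves the relaxed version, Theorem~\ref{thm3-r}, by exactly your factorization $A\le (A_d+A^z)(I+A_d^{-1}A^s)$ followed by Theorem~\ref{thm2} with $k=1$, $M_1=A_d+A^z$ and $L=I+A_d^{-1}A^s$ (with $A_{d^*}$ in place of $A_d$). Your additional care with the connectivity transfer --- noting that $A^z$ and $A^s$ have zero diagonals and that the positive diagonal scaling $A_d^{-1}$ preserves the off-diagonal sparsity pattern --- is sound and fills in a detail the paper leaves implicit.
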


The following result can be found in \cite{zhang2024monotonicity}: 
\begin{Proposition} The matrix $L$ in Theorem \ref{thm2} must be an M-matrix.
\end{Proposition}

In practice, the condition like \eqref{cond3} can be difficult to verify. In this paper, the vector $\mathbf e$ will be taken as $\mathbf 1$ consisting of all ones in Theorem \ref{thm2}.

\section{Monotonicity of $Q^3$ spectral element method on a uniform mesh}
\label{sec-q3}
\subsection{The main approach for proving monotonicity}
Even though Lorenz's condition Theorem \ref{thm3} can be nicely verified for the $Q^2$ spectral element method in finite difference form \cite{li2019monotonicity}, it is very difficult to apply Lorenz's condition to higher order $Q^k$ spectral element methods due to their much more complicated structure. In particular, even for $Q^3$ scheme,
simple decomposition of $A_a^- =A^z + A^s$ such that  $A_a^+ \leq A^z  A_d^{-1}A^s $ is difficult to show. Instead, we propose to apply Theorem \ref{thm3} to a few simpler intermediate and auxiliary matrices, then use Theorem \ref{thm2}. To be specific, let $A = A_3$ be the matrix representation of $Q^3$ spectral element method in finite difference form, and let $A_0 = M_1$ be an M-matrix. Then we seek to construct matrices $A_i$ and $L_i$ satisfying the conditions in Theorem \ref{thm3} such that 
\[A_1\leq A_0 L_0, \quad A_2\leq A_1 L_1, \quad A_3\leq A_2 L_2,\]
with the constraints that $A_i \mathbf 1\geq 0$ and $A_0 = M_1$ connects $\mathcal{N}^0(A_i\mathbf{1})$ with $\mathcal{N}^+(A_i\mathbf{1})$ for all $A_i$.
With $\mathbf e=\mathbf 1$ in Theorem \ref{thm2}, we have 
$$A_1 \leq A_0L_0 = M_1L_0 \Rightarrow{} A_1 = M_1M_2 \Rightarrow{} A_2 \leq M_1M_2L_1 \Rightarrow{} A_2 = M_1M_2M_3 $$
$$\Rightarrow A_3 \leq M_1M_2M_3L_2 \Rightarrow A = A_3 = M_1M_2M_3M_4.$$

We remark that the matrices $A_i$ and $L_i$ satisfying constraints above may not be unique. 
It is tedious to verify  the inequalities for the matrices listed in the rest of the section, especially for the matrices for the two-dimensional scheme.
 {\it For reviewers' convenience, we provide a symbolic computation code in MATLAB for easily verifying these inequalities. They can be also downloaded at}
\href{https://www.math.purdue.edu/~zhan1966/research/code/Q3proof.tar.gz}{https://www.math.purdue.edu/$\sim$ zhan1966/research/code/Q3proof.tar.gz}.
{\bf We emphasize that  the proof in this section is a rigorous constructive mathematical proof, with or without such a code for verifying it. }
\subsection{One-dimensional scheme}

We first demonstrate the main idea for the one-dimensional case, for which we only need to construct matrices such that $A_1\leq A_0 L_0,  A\leq A_1 L_1.$

Let $\bar L_h$ denote the coefficient matrix in \eqref{q3-scheme-1d}, then consider $A=\frac{h^2}{4}\bar L_h$.
For convenience, we will perceive the matrix $A$ as a linear operator $\mathcal A$.
Notice that the coefficients for two interior points are symmetric in \eqref{q3-scheme-1d}, thus we will only show stencil for the left interior point for simplicity:
\begin{align*}
&\text{ $\mathcal A$ at boundary   point $x_0$ or $x_{n+1}$}:
      \mathbf{\frac{h^2}{4}} \\
    &\text{ $\mathcal A$ at knot}:
       \Adkth  \; \; \; \; \Adkt \; \; \; \; \Adko \; \; \; \; \Adkz \; \; \; \; \Adko \; \; \; \; \Adkt \; \; \; \; \Adkth \\
    &\text{ $\mathcal A$ at interior point}:\; \; \; \;
       \Adise  \; \; \; \;  \Adiz \; \; \; \; \Adis \; \; \; \; \Adif,
\end{align*}
where bolded entries indicate the coefficient for the operator output location $x_i$.  

For all the matrices    defined below, they will have symmetric structure at two interior points, thus for simplicity we will only show the
stencil of the corresponding linear operators for the left interior point. 
We first define three matrices $A_1$,  $A_0$, and $Z_0$.
\begin{align*}
\text{$\mathcal A_1$ at boundary}: \; \; \; \; &  \mathbf{\frac{h^2}{4}}\\
    \text{$\mathcal A_1$ at knot}: \; \; \; \;
   &    \Aodkth  \; \; \; \; \Aodkt \; \; \; \; \Aodko \; \; \; \; \Aodkz \; \; \; \; \Aodko \; \; \; \; \Aodkt \; \; \; \; \Aodkth \\
    \text{$\mathcal A_1$ at interior point:} \; \; \; \;
    &   \Aodise  \; \; \; \; \Aodiz \; \; \; \; \Aodis \; \; \; \; \Aodif \\
    \text{$\mathcal A_0$ at boundary}: \; \; \; \; &  \mathbf{\frac{h^2}{4}}\\
    \text{$\mathcal A_0$ at knot:} \; \; \; \;
 &      \Modkth \; \; \; \; \Modkt \; \; \; \; \Modko \; \; \; \; \Modkz \; \; \; \; \Modko \; \; \; \; \Modkt \; \; \; \; \Modkth \\
    \text{$\mathcal A_0$ at interior point:} \; \; \; \;
   &    \Modise  \; \; \; \;  \Modiz \; \; \; \; \Modis \; \; \; \; \Modif \\
   \text{$\mathcal Z_0$ at boundary}: \; \; \; \; &\mathbf{0} \\
    \text{$\mathcal Z_0$ at knot:} \; \; \; \;
      & \Asodkth  \; \; \; \; \Asodkt \; \; \; \; \Asodko \; \; \; \; \Asodkz \; \; \; \; \Asodko \; \; \; \; \Asodkt \; \; \; \; \Asodkth \\
    \text{$\mathcal Z_0$ at interior point:} \; \; \; \;
      & \Asodise  \; \; \; \;  \Asodiz \; \; \; \; \Asodis \; \; \; \; \Asodif  \end{align*}
Then we define $L_0= I+(A_0)_d^{-1}Z_0$ where $I$ is the identity matrix and $(A_0)_d$ denotes the diagonal part of $A_0$. By considering composition of two operators $\mathcal A_0$ and $\mathcal L_0$, we get the matrix product $A_0L_0$. 
 Due to the definition of $Z_0$, $\mathcal A_0\mathcal L_0$ still has the same stencil as above:      
      \begin{align*}
        \text{$\mathcal A_0 \mathcal L_0$ at boundary}: \; \; \; \; &\mathbf{\frac{h^2}{4}} \\
    \text{ $\mathcal A_0\mathcal L_0$ at knot:} \; \; \; \;
 &      \MoLodkth  \; \; \; \; \MoLodkt \; \; \; \; \MoLodko \; \; \; \; \MoLodkz \; \; \; \; \MoLodko \; \; \; \; \MoLodkt \; \; \; \; \MoLodkth \\
    \text{$\mathcal A_0\mathcal L_0$ at interior point:} \; \; \; \;
&       \MoLodise \; \; \; \;  \MoLodiz \; \; \; \; \MoLodis \; \; \; \; \MoLodif
\end{align*}

It is straightforward to see $A_1\leq A_0 L_0$. By Theorem \ref{rowsumcondition-thm}, $A_0$ is an M-matrix, thus we set $M_1=A_0$. 
Also it is easy to see that $\mathcal A_1(\mathbf  1)> 0$ thus $\mathcal N^0(A_1\mathbf 1)$ is an empty set. So
$A_0$ trivially connects $\mathcal N^0(A_1\mathbf 1)$ with $\mathcal N^+(A_1\mathbf 1)$. 
By Theorem \ref{thm2},  we have  $A_1 \leq A_0L_0 = M_1L_0 \Rightarrow{} A_1 = M_1M_2$ where $M_2$ is an M-matrix. 

Let $ (A_1)_d$ denote the diagonal part of $A_1$. Then define $L_1 = I + (A_1)_d^{-1}Z_1$ using the following $Z_1$:
\begin{align*}
 \text{$\mathcal Z_1$ at boundary:} \; \; \; \;
&  \mathbf 0\\
    \text{$\mathcal Z_1$ at knot:} \; \; \; \;
&       \Astdkth  \; \; \; \; \Astdkt \; \; \; \; \Astdko \; \; \; \; \Astdkz \; \; \; \; \Astdko \; \; \; \; \Astdkt \; \; \; \; \Astdkth \\
    \text{$\mathcal Z_1$ at interior point:} \; \; \; \;
  &     \Astdise  \; \; \; \;  \Astdiz \; \; \; \; \Astdis \; \; \; \; \Astdif
\end{align*}
And the matrix  $A_1L_1$ still have the same stencil and symmetry:
\begin{align*}
& \text{$\mathcal A_1\mathcal L_1$ at boundary:} \; \; \; \;
  \mathbf{\frac{h^2}{4}}\\
    &\resizebox{1\hsize}{0.020\textheight}{
    \text{$\mathcal A_1\mathcal L_1$ at knot:} 
       $\AoLtdkth$  \; \; $\AoLtdkt$ \; \;  $\AoLtdko$ \; \; $\AoLtdkz$ \; \; $\AoLtdko$ \; \; $\AoLtdkt$ \; \;$\AoLtdkth$}\\
  &  \text{$\mathcal A_1\mathcal L_1$ at  interior point:} \; \; \; \;
       \AoLtdise \; \; \; \;  \AoLtdiz \; \; \; \; \AoLtdis \; \; \; \; \AoLtdif
\end{align*}
A direct comparison  verifies that $A \leq A_1L_1=M_1M_2L_1$.
Also it is easy to see that $\mathcal A(\mathbf  1)_i=0$ if $x_i$ is not a boundary point.
The operator $\mathcal A_0$ has a three-point stencil at interior grid points, thus the directed graph defined by the adjacency matrix $A_0$ has a directed path starting from
any interior grid point to any other point, see Figure \ref{q3_graph_1D}. 
So
$M_1=A_0$  connects $\mathcal N^0(A \mathbf 1)$ with $\mathcal N^+(A \mathbf 1)$. 
By Theorem \ref{thm2},  we have  $A \leq A_1L_1=M_1M_2L_1 \Rightarrow{} A = M_1M_2 M_3$ where $M_3$ is an M-matrix. 
Therefore, $A^{-1}=M_3^{-1}M_2^{-1} M_1^{-1}\geq 0.$

\begin{figure}[htbp]
    \label{q3_graph_1D} \centering \includegraphics[scale=1.4]{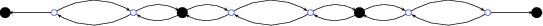}
    \caption{The directed graph defined by matrix $M_1$ for the finite difference grid shown in Figure \ref{q3_stencil_def_1D}.}
\end{figure}

\subsection{Two-dimensional case}
\begin{figure}[htbp]
\subfigure[Three point types defining the stencil: knot (black), edge point (blue), interior point (green).]{\includegraphics[scale=0.9]{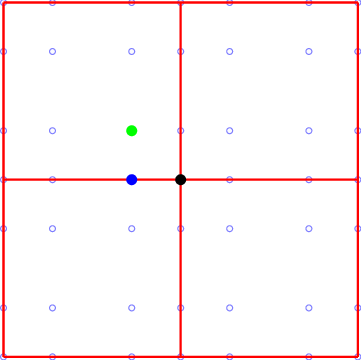}}
\subfigure[The directed graph defined by the matrix $M_1$.]{\includegraphics[scale=0.9]{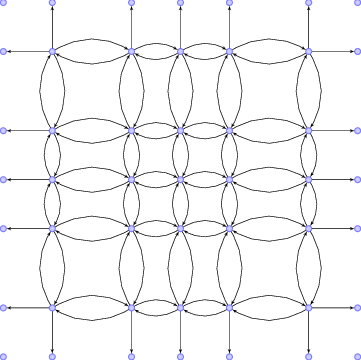}}
\hspace{0.3in}
    \caption{An illustration of a $Q^3$ mesh with $2\times 2$ cells.} 
    \label{q3_pointtype}
\end{figure}
Due to symmetry, the stencil of the scheme can be defined at three different types of points, 
see Figure \ref{q3_pointtype} (a). Let each rectangular cell have size $h \times h$ and denote  $Q^3$ scheme  by $\bar L_h\bar {\mathbf u} =\bar {\mathbf f} $. Let $A=\frac{h^2}{4}\bar L_h$.
Then for a boundary point $(x_i, y_j)\in \partial\Omega$, $\mathcal A(\bar {\mathbf u})_{ij}=\frac{h^2}{4}u_{ij}$. And the stencil of $\mathcal A$ at interior grid points is given as

\begin{table}[H]
\centering    
    \resizebox{1.1\textwidth}{!}{%
    \begin{tabular}{lr}
    \multicolumn{2}{c}{
    \begin{tabular}{rccccccc} 
        &   &   &   &   \Addkth   &   &   &   
        \\
        \\
        &   &   &   &   \Addkt   &   &   &   
        \\
        \\
        &   &   &   &  \Addko   &   &   &   
        \\
        \\
      \bf{$\mathcal A$ at knot}:  & \Addkth & \Addkt  & \Addko  &  \Addkz   & \Addko  &  \Addkt &  \Addkth
        \\
        \\
        &   &   &   &   \Addko &   &   &   
        \\
        \\
        &   &   &   &   \Addkt   &   &   &   
        \\
        \\
        &   &   &   & \Addkth   &   &   &   
    \end{tabular}
    
    }
    \\
    \begin{tabular}{rcccc} 

        &   &   &   \Addeth   &     
        \\
        \\
        &   &   &   \Addet   &    
        \\
        \\
        &   &   &  \Addeo   &    
        \\
        \\
      \bf{$\mathcal A$ at edge point}:  & \Addef & \Addes   &  \Addez & \Addese
        \\
        \\
        &   &   &   \Addeo &     
        \\
        \\
        &   &   &   \Addet   &     
        \\
        \\
        &   &   & \Addeth   &     
    \end{tabular}

    &

    \begin{tabular}{rcccc} 
        &   &   &   \Addif   &    
        \\
        \\
        &   &   &  \Addis   &    
        \\
        \\
      \bf{$\mathcal A$ at interior point:}  & \Addif & \Addis   &  \Addiz & \Addise
        \\
        \\
        &   &   &   \Addise & 
        \\
        \\
        &   &   &   & 
    \end{tabular}
    \end{tabular}}
      
\end{table}

Next we list the definition of matrices $A_i$ and $Z_i$ by the corresponding linear operators $\mathcal A_i$ and $ \mathcal  Z_i$. For convenience, we will only list the stencil at interior grid points. For the domain boundary points $(x_i, y_j)\in\partial\Omega$, all $A_i$ matrices will have the same value as $A$: $\mathcal A_i(\bar{\mathbf u})_{ij}=\frac{h^2}{4}u_{ij}.$ And  $\mathcal Z_i(\bar{\mathbf u})_{ij}=0$ for  $(x_i, y_j)\in\partial\Omega$. 
The matrix $L_i$ is defined as 
$L_i = I + (A_{i})_d^{-1}Z_i,\quad i=0,1,2.$ 
The matrices and their products are given by:

\begin{table}[H]
\centering
    
    \resizebox{ 1.2\textwidth}{!}{%
    \begin{tabular}{lr}
    \multicolumn{2}{c}{
    \begin{tabular}{rccccccc} 
        &   &   &   &   \Aoddkth   &   &   &   
        \\
        \\
        &   &   &   &   \Aoddkt   &   &   &   
        \\
        \\
        &   &   &   &  \Aoddko   &   &   &   
        \\
        \\
      \bf{$\mathcal A_1$ at knot}:  & \Aoddkth & \Aoddkt  & \Aoddko  &  \Aoddkz   & \Aoddko  &  \Aoddkt &  \Aoddkth
        \\
        \\
        &   &   &   &   \Aoddko &   &   &   
        \\
        \\
        &   &   &   &   \Aoddkt   &   &   &   
        \\
        \\
        &   &   &   & \Aoddkth   &   &   &   
    \end{tabular}
    
    }
    
    \\
    \begin{tabular}{rcccc} 

        &   &   &   \Aoddeth   &     
        \\
        \\
        &   &   &   \Aoddet   &    
        \\
        \\
        &   &   &  \Aoddeo   &    
        \\
        \\
      \bf{$\mathcal A_1$ at edge point}:  & \Aoddef & \Aoddes   &  \Aoddez & \Aoddese
        \\
        \\
        &   &   &   \Aoddeo &     
        \\
        \\
        &   &   &   \Aoddet   &     
        \\
        \\
        &   &   & \Aoddeth   &     
    \end{tabular}

    &

    \begin{tabular}{rcccc} 
        &   &   &   \Aoddif   &    
        \\
        \\
        &   &   &  \Aoddis   &    
        \\
        \\
      \bf{$\mathcal A_1$ at interior point:}  & \Aoddif & \Aoddis   &  \Aoddiz & \Aoddise
        \\
        \\
        &   &   &   \Aoddise & 
        \\
        \\
        &   &   &   & 
    \end{tabular}
    \\
    
     \begin{tabular}{rccccccc} 
        &   &   &   &   \Moddkth   &   &   &   
        \\
        \\
        &   &   &   &   \Moddkt   &   &   &   
        \\
        \\
        &   &   &   &  \Moddko   &   &   &   
        \\
        \\
      \bf{$\mathcal A_0$ at knot}:  & \Moddkth & \Moddkt  & \Moddko  &  \Moddkz   & \Moddko  &  \Moddkt &  \Moddkth
        \\
        \\
        &   &   &   &   \Moddko &   &   &   
        \\
        \\
        &   &   &   &   \Moddkt   &   &   &   
        \\
        \\
        &   &   &   & \Moddkth   &   &   &   
    \end{tabular}\\
    
    \begin{tabular}{rcccc} 

        &   &   &   \Moddeth   &     
        \\
        \\
        &   &   &   \Moddet   &    
        \\
        \\
        &   &   &  \Moddeo   &    
        \\
        \\
      \bf{$\mathcal A_0$ at  edge point}:  & \Moddef & \Moddes   &  \Moddez & \Moddese
        \\
        \\
        &   &   &   \Moddeo &     
        \\
        \\
        &   &   &   \Moddet   &     
        \\
        \\
        &   &   & \Moddeth   &     
    \end{tabular}

    &

    \begin{tabular}{rcccc} 
        &   &   &   \Moddif   &    
        \\
        \\
        &   &   &  \Moddis   &    
        \\
        \\
      \bf{$\mathcal A_0$ at  interior point:}  & \Moddif & \Moddis   &  \Moddiz & \Moddise
        \\
        \\
        &   &   &   \Moddise & 
        \\
        \\
        &   &   &   & 
    \end{tabular} 
    \end{tabular}}

\end{table}

\begin{table}[H]
\centering

    \resizebox{1 \textwidth}{!}{%
    \begin{tabular}{lr}
    \multicolumn{2}{c}{
    \begin{tabular}{rccccccc} 
        &   &   &   &   \Asoddkth   &   &   &   
        \\
        \\
        &   &   &   &   \Asoddkt   &   &   &   
        \\
        \\
        &   &   &   &  \Asoddko   &   &   &   
        \\
        \\
      \bf{$\mathcal Z_0$ at  knot }:  & \Asoddkth & \Asoddkt  & \Asoddko  &  \Asoddkz   & \Asoddko  &  \Asoddkt &  \Asoddkth
        \\
        \\
        &   &   &   &   \Asoddko &   &   &   
        \\
        \\
        &   &   &   &   \Asoddkt   &   &   &   
        \\
        \\
        &   &   &   & \Asoddkth   &   &   &   
    \end{tabular}
    
    }
    
    \\
    \begin{tabular}{rcccc} 

        &   &   &   \Asoddeth   &     
        \\
        \\
        &   &   &   \Asoddet   &    
        \\
        \\
        &   &   &  \Asoddeo   &    
        \\
        \\
      \bf{$\mathcal Z_0$ at  edge point}:  & \Asoddef & \Asoddes   &  \Asoddez & \Asoddese
        \\
        \\
        &   &   &   \Asoddeo &     
        \\
        \\
        &   &   &   \Asoddet   &     
        \\
        \\
        &   &   & \Asoddeth   &     
    \end{tabular}

    &

    \begin{tabular}{rcccc} 
        &   &   &   \Asoddif   &    
        \\
        \\
        &   &   &  \Asoddis   &    
        \\
        \\
      \bf{$\mathcal Z_0$ at  interior point:}  & \Asoddif & \Asoddis   &  \Asoddiz & \Asoddise
        \\
        \\
        &   &   &   \Asoddise & 
        \\
        \\
        &   &   &   & 
    \end{tabular}
    \end{tabular}}
\centering
    
    \resizebox{1.2\textwidth}{!}{%
    \begin{tabular}{lr}
    \multicolumn{2}{c}{
    \begin{tabular}{rccccccc} 
        &   &   &   &   \MoLoddkth   &   &   &   
        \\
        \\
        &   &   &   &   \MoLoddkt   &   &   &   
        \\
        \\
        &   &   &   &  \MoLoddko   &   &   &   
        \\
        \\
      \bf{$\mathcal A_0 \mathcal L_0$ at  knot}:  & \MoLoddkth & \MoLoddkt  & \MoLoddko  &  \MoLoddkz   & \MoLoddko  &  \MoLoddkt &  \MoLoddkth
        \\
        \\
        &   &   &   &   \MoLoddko &   &   &   
        \\
        \\
        &   &   &   &   \MoLoddkt   &   &   &   
        \\
        \\
        &   &   &   & \MoLoddkth   &   &   &   
    \end{tabular}
    
    }
    
    \\
    \begin{tabular}{rcccc} 

        &   &   &   \MoLoddeth   &     
        \\
        \\
        &   &   &   \MoLoddet   &    
        \\
        \\
        &   &   &  \MoLoddeo   &    
        \\
        \\
      \bf{$\mathcal A_0 \mathcal L_0$ at  edge point}:  & \MoLoddef & \MoLoddes   &  \MoLoddez & \MoLoddese
        \\
        \\
        &   &   &   \MoLoddeo &     
        \\
        \\
        &   &   &   \MoLoddet   &     
        \\
        \\
        &   &   & \MoLoddeth   &     
    \end{tabular}

    &

    \begin{tabular}{rcccc} 
        &   &   &   \MoLoddif   &    
        \\
        \\
        &   &   &  \MoLoddis   & \MoLoddie   
        \\
        \\
      \bf{$\mathcal A_0 \mathcal L_0$ at  interior point:}  & \MoLoddif & \MoLoddis   &  \MoLoddiz & \MoLoddise
        \\
        \\
        &   & \MoLoddie  &   \MoLoddise & 
        \\
        \\
        &   &   &   & 
    \end{tabular}
    \end{tabular}}

\end{table}

\begin{table}[H]
\centering
    
    \resizebox{\textwidth}{!}{%
    \begin{tabular}{lr}
    \multicolumn{2}{c}{
    \begin{tabular}{rccccccc} 
        &   &   &   &   \Atddkth   &   &   &   
        \\
        \\
        &   &   &   &   \Atddkt   &   &   &   
        \\
        \\
        &   &   &   &  \Atddko   &   &   &   
        \\
        \\
      \bf{$\mathcal A_2$ at  knot}:  & \Atddkth & \Atddkt  & \Atddko  &  \Atddkz   & \Atddko  &  \Atddkt &  \Atddkth
        \\
        \\
        &   &   &   &   \Atddko &   &   &   
        \\
        \\
        &   &   &   &   \Atddkt   &   &   &   
        \\
        \\
        &   &   &   & \Atddkth   &   &   &   
    \end{tabular}
    
    }
    
    \\
    \begin{tabular}{rcccc} 

        &   &   &   \Atddeth   &     
        \\
        \\
        &   &   &   \Atddet   &  \Atdden  
        \\
        \\
        &   &   &  \Atddeo   &    
        \\
        \\
      \bf{$\mathcal A_2$ at   edge point}:  & \Atddef & \Atddes   &  \Atddez & \Atddese
        \\
        \\
        &   &   &   \Atddeo &     
        \\
        \\
        &   &   &   \Atddet   &  \Atdden   
        \\
        \\
        &   &   & \Atddeth   &     
    \end{tabular}

    &

    \begin{tabular}{rcccc} 
        &   &   &   \Atddif   &    
        \\
        \\
        &   &   &  \Atddis   &  \Atddie  
        \\
        \\
      \bf{$\mathcal A_2$ at   interior point:}  & \Atddif & \Atddis   &  \Atddiz & \Atddise
        \\
        \\
        &   &  \Atddie &   \Atddise & 
        \\
        \\
        &   &   &   & 
    \end{tabular}
    \end{tabular}}
 
\centering
    
    \resizebox{\tabletextwidtho\textwidth}{!}{%
    \begin{tabular}{lr}
    \multicolumn{2}{c}{
    \begin{tabular}{rccccccc} 
        &   &   &   &   \Astddkth   &   &   &   
        \\
        \\
        &   &   &   &   \Astddkt   &   &   &   
        \\
        \\
        &   &   &   &  \Astddko   &   &   &   
        \\
        \\
      \bf{$\mathcal Z_1 $ at   knot }:  & \Astddkth & \Astddkt  & \Astddko  &  \Astddkz   & \Astddko  &  \Astddkt &  \Astddkth
        \\
        \\
        &   &   &   &   \Astddko &   &   &   
        \\
        \\
        &   &   &   &   \Astddkt   &   &   &   
        \\
        \\
        &   &   &   & \Astddkth   &   &   &   
    \end{tabular}
    
    }
    
    \\
    \begin{tabular}{rcccc} 

        &   &   &   \Astddeth   &     
        \\
        \\
        &   &   &   \Astddet   &    
        \\
        \\
        &   &   &  \Astddeo   &    
        \\
        \\
      \bf{$\mathcal Z_1 $ at   edge point}:  & \Astddef & \Astddes   &  \Astddez & \Astddese
        \\
        \\
        &   &   &   \Astddeo &     
        \\
        \\
        &   &   &   \Astddet   &     
        \\
        \\
        &   &   & \Astddeth   &     
    \end{tabular}

    &

    \begin{tabular}{rcccc} 
        &   &   &   \Astddif   &    
        \\
        \\
        &   &   &  \Astddis   &  \Astddie  
        \\
        \\
      \bf{$\mathcal Z_1 $ at   interior point:}  & \Astddif & \Astddis   &  \Astddiz & \Astddise
        \\
        \\
        &   &  \Astddie &   \Astddise & 
        \\
        \\
        &   &   &   & 
    \end{tabular}
    \end{tabular}}
 
\centering
    
    \resizebox{1.2\textwidth}{!}{%
    \begin{tabular}{lr}
    \multicolumn{2}{c}{
    \begin{tabular}{rccccccc} 
        &   &   &   &   \AoLtddkth   &   &   &   
        \\
        \\
        &   &   &   &   \AoLtddkt   &   &   &   
        \\
        \\
        &   &   &   &  \AoLtddko   &   &   &   
        \\
        \\
      \bf{$\mathcal A_1\mathcal L_1$ at knot}:  & \AoLtddkth & \AoLtddkt  & \AoLtddko  &  \AoLtddkz   & \AoLtddko  &  \AoLtddkt &  \AoLtddkth
        \\
        \\
        &   &   &   &   \AoLtddko &   &   &   
        \\
        \\
        &   &   &   &   \AoLtddkt   &   &   &   
        \\
        \\
        &   &   &   & \AoLtddkth   &   &   &   
    \end{tabular}
    
    }
    
    \\
    \begin{tabular}{rcccc} 

        &   &   &   \AoLtddeth   &     
        \\
        \\
        &   &   &   \AoLtddet   &  \AoLtdden   
        \\
        \\
        &   &  \AoLtddet &  \AoLtddeo   &    
        \\
        \\
      \bf{$\mathcal A_1\mathcal L_1$ at edge point}:  & \AoLtddef & \AoLtddes   &  \AoLtddez & \AoLtddese
        \\
        \\
        &   & \AoLtddet  &   \AoLtddeo &     
        \\
        \\
        &   &   &   \AoLtddet   &   \AoLtdden  
        \\
        \\
        &   &   & \AoLtddeth   &     
    \end{tabular}

    &

    \begin{tabular}{rcccc} 
        &   &  \AoLtddiel &   \AoLtddif   &    
        \\
        \\
        &  \AoLtddiel & \AoLtddite  &  \AoLtddis   &    \AoLtddie
        \\
        \\
      \bf{$\mathcal A_1\mathcal L_1$ interior point:}  & \AoLtddif & \AoLtddis   &  \AoLtddiz & \AoLtddise
        \\
        \\
        &   &  \AoLtddie &   \AoLtddise & \AoLtdditw
        \\
        \\
        &   &   &   & 
    \end{tabular}
    \end{tabular}}

\end{table}

\begin{table}[H]
\centering
    
    \resizebox{\tabletextwidtho\textwidth}{!}{%
    \begin{tabular}{lr}
    \multicolumn{2}{c}{
    \begin{tabular}{rccccccc} 
        &   &   &   &   \Asthddkth   &   &   &   
        \\
        \\
        &   &   &   &   \Asthddkt   &   &   &   
        \\
        \\
        &   &   &   &  \Asthddko   &   &   &   
        \\
        \\
      \bf{$\mathcal Z_2$ at knot}:  & \Asthddkth & \Asthddkt  & \Asthddko  &  \Asthddkz   & \Asthddko  &  \Asthddkt &  \Asthddkth
        \\
        \\
        &   &   &   &   \Asthddko &   &   &   
        \\
        \\
        &   &   &   &   \Asthddkt   &   &   &   
        \\
        \\
        &   &   &   & \Asthddkth   &   &   &   
    \end{tabular}
    
    }
    
    \\
    \begin{tabular}{rcccc} 

        &   &   &   \Asthddeth   &     
        \\
        \\
        &   &   &   \Asthddet   &    
        \\
        \\
        &   &   &  \Asthddeo   &    
        \\
        \\
      \bf{$\mathcal Z_2$ at edge point}:  & \Asthddef & \Asthddes   &  \Asthddez & \Asthddese
        \\
        \\
        &   &   &   \Asthddeo &     
        \\
        \\
        &   &   &   \Asthddet   &     
        \\
        \\
        &   &   & \Asthddeth   &     
    \end{tabular}

    &

    \begin{tabular}{rcccc} 
        &   &   &   \Asthddif   &    
        \\
        \\
        &   &   &  \Asthddis   &    
        \\
        \\
      \bf{$\mathcal Z_2$ at interior point:}  & \Asthddif & \Asthddis   &  \Asthddiz & \Asthddise
        \\
        \\
        &   &   &   \Asthddise & 
        \\
        \\
        &   &   &   & 
    \end{tabular}
    \end{tabular}}

\centering
    
    \resizebox{1.2\textwidth}{!}{%
    \begin{tabular}{lr}
    \multicolumn{2}{c}{
    \begin{tabular}{rccccccc} 
        &   &   &   &   \AtLthddkth   &   &   &   
        \\
        \\
        &   &   &   &   \AtLthddkt   &   &   &   
        \\
        \\
        &   &   &   &  \AtLthddko   &   &   &   
        \\
        \\
      \bf{$\mathcal A_2\mathcal L_2$ at knot }:  & \AtLthddkth & \AtLthddkt  & \AtLthddko  &  \AtLthddkz   & \AtLthddko  &  \AtLthddkt &  \AtLthddkth
        \\
        \\
        &   &   &   &   \AtLthddko &   &   &   
        \\
        \\
        &   &   &   &   \AtLthddkt   &   &   &   
        \\
        \\
        &   &   &   & \AtLthddkth   &   &   &   
    \end{tabular}
    
    }
    
    \\
    \hline
    \begin{tabular}{rcccc} 

        &   &   &   \AtLthddeth   &     
        \\
        \\
        &   &   &   \AtLthddet   &  \AtLthdden  
        \\
        \\
        &   &   &  \AtLthddeo   &    \AtLthddethirt
        \\
        \\
      \bf{$\mathcal A_2\mathcal L_2$ at edge point}:  & \AtLthddef & \AtLthddes   &  \AtLthddez & \AtLthddese
        \\
        \\
        &   &   &   \AtLthddeo &    \AtLthddethirt 
        \\
        \\
        &   &   &   \AtLthddet   &   \AtLthdden  
        \\
        \\
        &   &   & \AtLthddeth   &     
    \end{tabular}

    \\
    
   \hline
    
    \begin{tabular}{rcccc} 
        &   & \AtLthddiel  &   \AtLthddif   &    
        \\
        \\
        &  \AtLthddiel & \AtLthddite  &  \AtLthddis   &    \AtLthddie
        \\
        \\
      \bf{$\mathcal A_2\mathcal L_2$ at interior point:}  & \AtLthddif & \AtLthddis   &  \AtLthddiz & \AtLthddise
        \\
        \\
        &   & \AtLthddie  &   \AtLthddise & \AtLthdditw
        \\
        \\
        &   &   &   & 
    \end{tabular}
    \end{tabular}}

\end{table}

By Theorem \ref{rowsumcondition-thm}, $A_0$ is an M-matrix, thus we set $M_1=A_0$. Notice that the matrix $M_1=A_0$ has a 5-point stencil and the directed graph defined by $M_1$ is given in Figure \ref{q3_pointtype} (b), in which there is a directed path starting from any interior grid point to any other point.
For convenience, let $A_3=A$. 
Then we have $\mathcal A_k(\mathbf 1)\geq 0$ $(k=0,1,2,3)$. Moreover, $\mathcal A_k(\mathbf 1)_{ij}>0$ $(k=0,1,2,3)$ for any domain boundary point $(x_i, y_j)\in\partial\Omega$.  The directed graph  defined by $M_1$   easily implies that $M_1$ connects $\mathcal N^0(A_i\mathbf 1)$ with $\mathcal N^+(A_i\mathbf 1)$ for all $i=0,1,2,3$.

By straightforward comparison, we can verify that $A_1\leq A_0 L_0,  A_2\leq A_1 L_1,  A\leq A_2 L_2$. 
By Theorem \ref{thm2},  we have 
$$A_1 \leq A_0L_0 = M_1L_0 \Rightarrow{} A_1 = M_1M_2 \Rightarrow{} A_2 \leq M_1M_2L_1 \Rightarrow{} A_2 = M_1M_2M_3 $$
$$\Rightarrow A \leq M_1M_2M_3L_2 \Rightarrow A = M_1M_2M_3M_4 \Rightarrow A^{-1}\geq 0.$$

\begin{rmk}
 The matrices $A_i$ and $L_i$ are found by matching the inequalities above. Such matrices are not unique. 
 These matrices and the inequalities can be easily verified by computer codes.
 
\end{rmk}

\section{Numerical Tests}
\label{sec:5}
\label{sec-test}
\subsection{Monotonicity tests}

For solving a one-dimensional Poisson equation $-u''=f$ on the domain $(0,1)$ with homogeneous Dirichlet boundary condition $u(0)=u(1)=0$,
consider the classical continuous finite element method using $P^k$ polynomial basis on a uniform  mesh consisting of $N$ intervals.
If all the integrals are replaced by $(k+1)$-point Gauss-Lobatto quadrature, then it is equivalent to a finite difference scheme at all Gauss-Lobatto points excluding two domain boundary points. The finite difference scheme can be written as $S\mathbf u=M\mathbf f$ or $Hu=\mathbf f$, where $S$ is the stiffness matrix, $M\geq 0$ is the lumped mass matrix
and $H=M^{-1}S$. The results in \cite{vejchodsky2007discrete} imply that $S\geq 0$ thus $H\geq 0$ for any $k$.
See Figure \ref{1dmono} for the smallest entry in the matrix $H^{-1}$ for $k=2,3,\dots, 15$ on different meshes.

For a two-dimensional Poisson equation $-u_{xx}-u_{yy}=f$ on the domain $(0,1)\times(0,1)$ with homogeneous Dirichlet boundary condition, 
the $Q^k$ spectral element method, i.e., $Q^k$ finite element method with $(k+1)\times(k+1)$-point Gauss-Lobatto quadrature, is equivalent to a finite difference scheme at all Gauss-Lobatto points excluding all domain boundary points. On a uniform mesh consisting of $N\times N$ rectangular cells, the stiffness matrix and lumped matrix can be written as $S\otimes M+M\otimes S$ and $M\otimes M$ respectively \cite{li2019fourth}. 
So the finite difference scheme matrix in two dimensions can be written as 
$$H2D=(M\otimes M)^{-1} (S\otimes M+M\otimes S)=H\otimes I+I\otimes H,$$
where the matrices $S, M, H$ are the same ones as in the one-dimensional scheme. 
Unfortunately, here the Kronecker product and $H^{-1}$ simply does not imply the inverse positivity of $H2D$. In numerical tests on a $N\times N$  mesh, 
there is a clear cut off at $k=9$. For $Q^k$ spectral element method with $k\geq 9$, the inverse positivity is simply lost in two dimensions even on very coarse meshes, see numerical results in Figure \ref{2dmono}.  

In three dimensions, the   finite difference scheme matrix $(-\Delta_h)$ 
of the $Q^k$ spectral element method can be written as
$$H3D=H\otimes I\otimes I+I\otimes H\otimes I+I\otimes I\otimes H.$$
The numerical tests shown in Figure \eqref{fig-3d} suggest that $H3D$ is no longer unconditionally monotone for 
$Q^k$ spectral element method with $k\geq 4$.

\begin{figure}[htbp]
\includegraphics[scale=0.28]{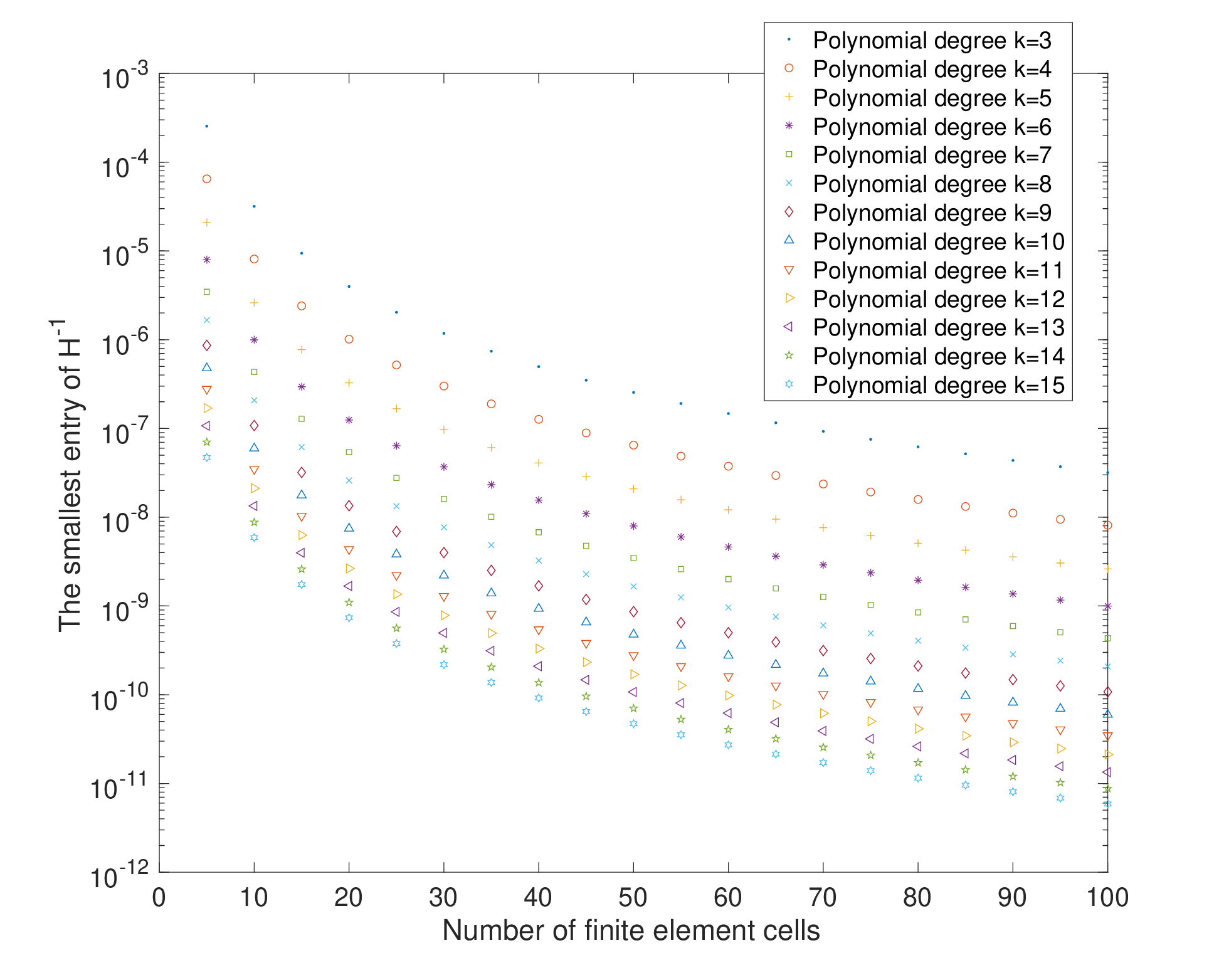}
\hspace{0.3in}
   \caption{$H^{-1}\geq 0$ holds for any $k$ in $P^k$ finite element method with $(k+1)$-point Gauss-Lobatto quadrature on a uniform mesh with $N$ cells for one-dimensional Laplacian.} 
     \label{1dmono}
\end{figure}

\begin{figure}[htbp] 
 \subfigure{\includegraphics[scale=0.25]{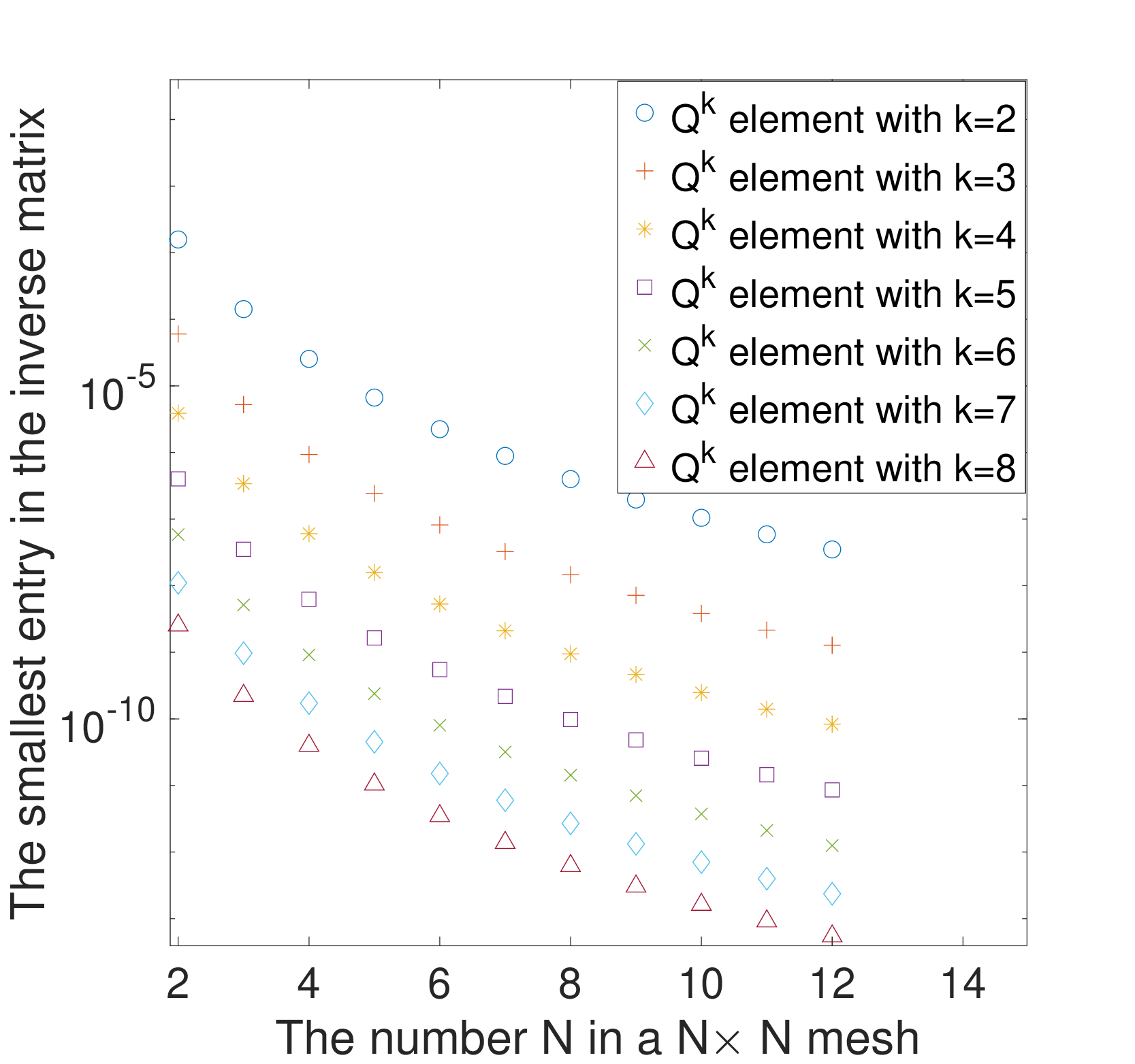} }
 \subfigure{\includegraphics[scale=0.25]{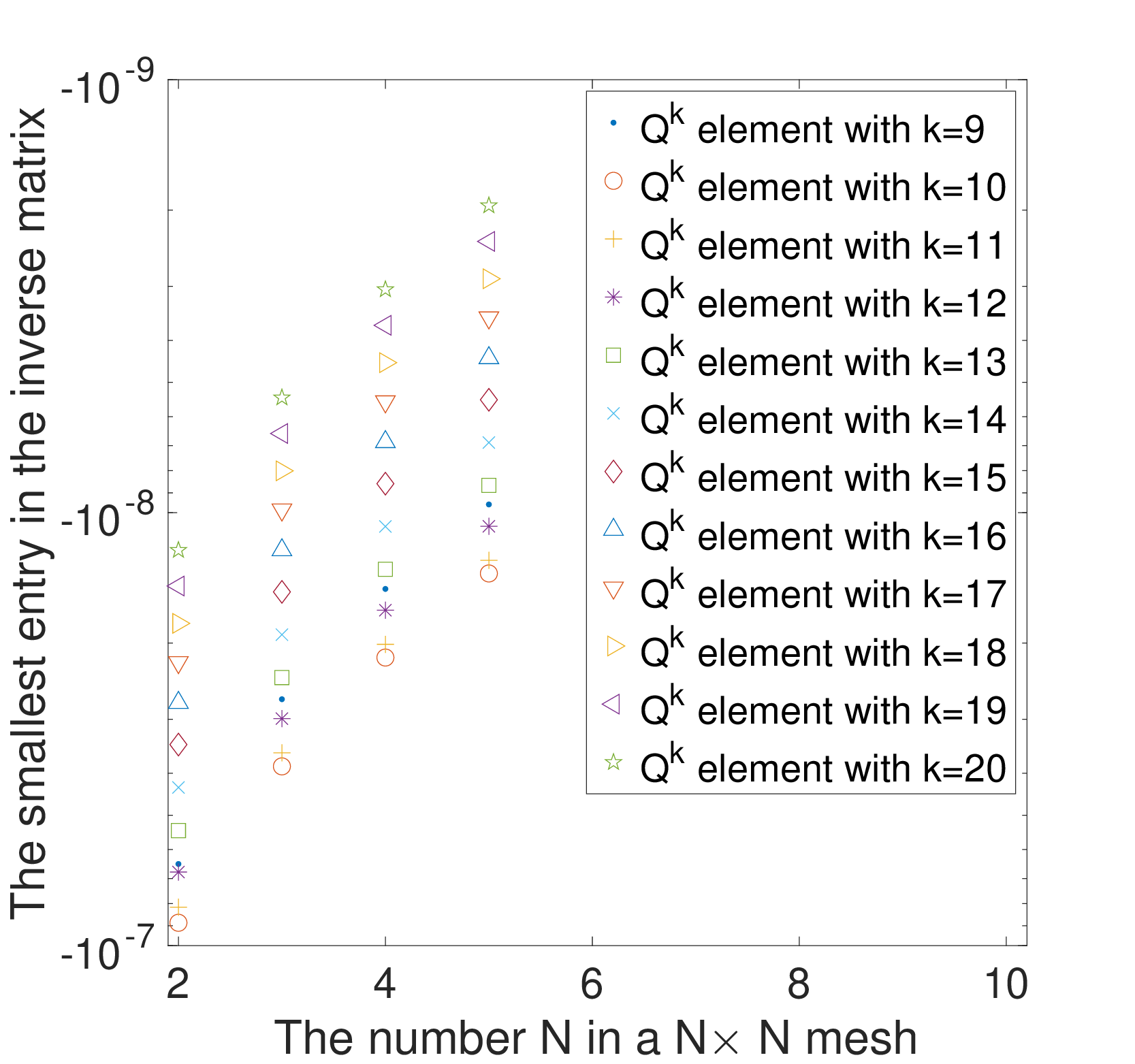} }
\hspace{0.3in}
    \caption{On a uniform mesh with $N\times N$ cells, inverse positivity of the matrix is simply lost in $Q^k$ spectral element method for 2D Laplacian if $k\geq 9$.} 
    \label{2dmono}
\end{figure}

\begin{figure}[htbp]
    \centering 
    \subfigure{\includegraphics[scale=0.28]{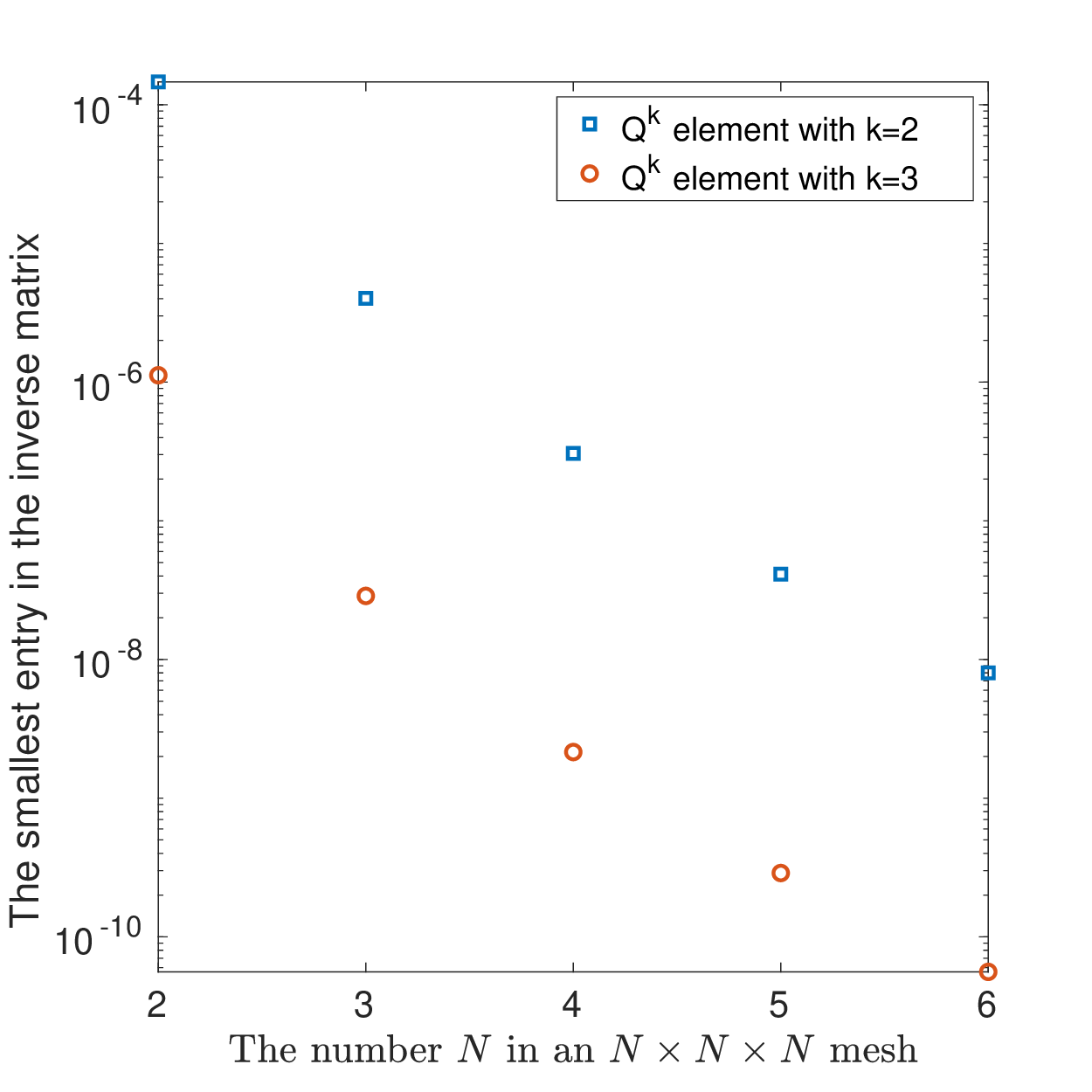}}
    \hspace{-0.5cm}
   \subfigure{ \includegraphics[scale=0.28]{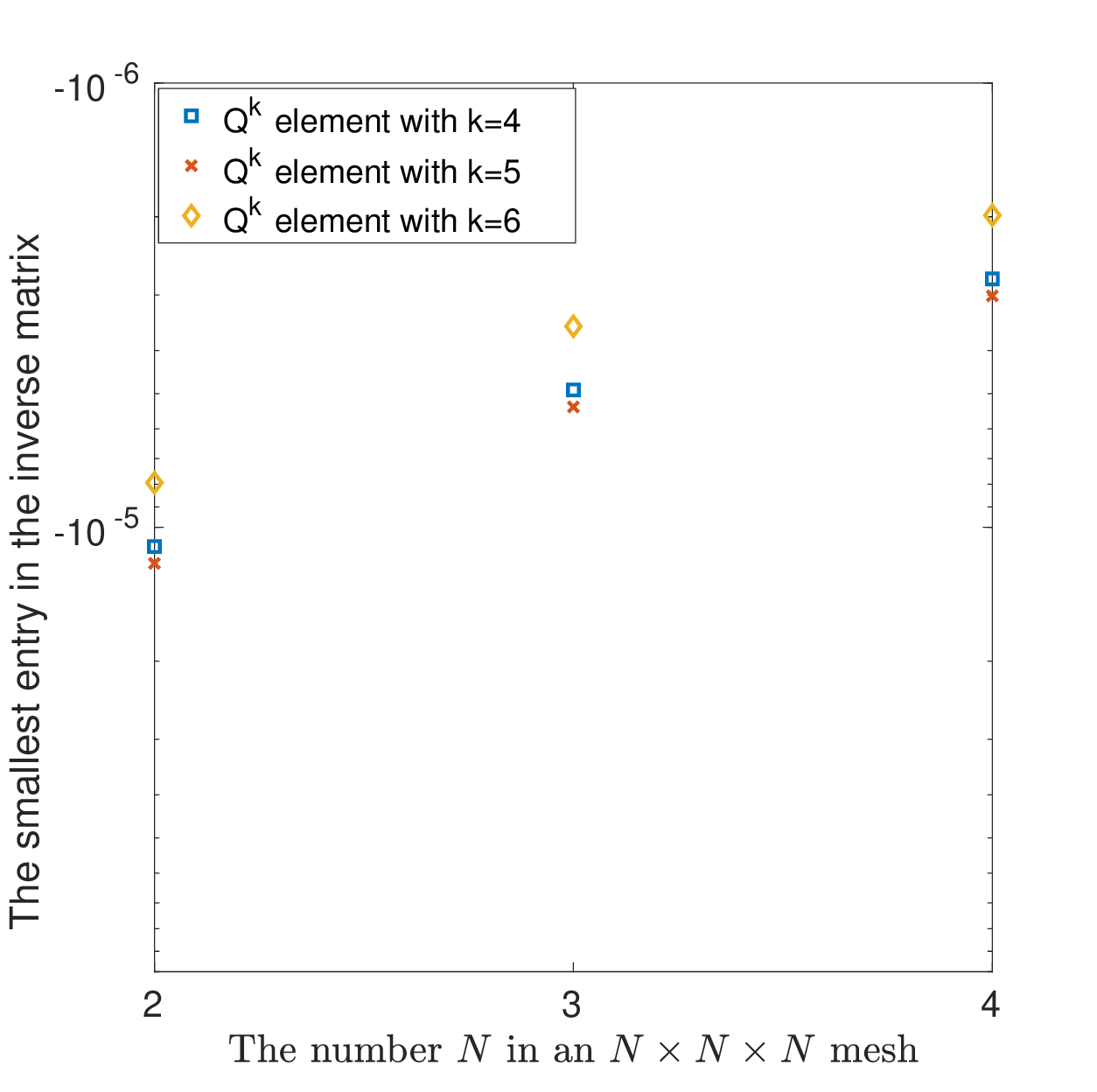} } 
    \caption{The smallest entry in the matrix $(H\otimes I\otimes I+I\otimes H\otimes I+I\otimes I\otimes H)^{-1}$ for $Q^k$ spectral element method on a uniform $N\times N\times N$ mesh for $Q^k$ spectral element method will be negative if $k\geq 4$. }
    \label{fig-3d}
\end{figure}

 \subsection{Accuracy tests}
For verifying the order for smooth solutions, we show some accuracy tests of the monotone schemes mentioned in this paper for solving $-\Delta u=f$ on a square $(0,1)\times{}(0,1)$ with Dirichlet boundary conditions. 
We will simply refer to the classical 9-point scheme \eqref{poisson-scheme3}  as 9-point scheme, and
 refer to its variant \eqref{poisson-scheme2} as compact finite difference.  The schemes are tested for the 
  Poisson equation $-\Delta u=f$ with nonhomogeneous Dirichlet boundary condition:
\begin{equation}\label{numtestpoisson3}
\begin{gathered}
f = 74\pi{}^2cos(5\pi{}x)cos(7\pi{}y) - 8 \\
u = cos(5\pi{}x)cos(7\pi{}y) + x^2 + y^2
\end{gathered}
\end{equation}

The errors of fourth order accurate schemes on uniform grids are listed in Table \ref{table-3}.
The  errors of $Q^3$ spectral element method  on uniform rectangular meshes are listed in  Table \ref{table-q3}.

\begin{table}[htbp]
\centering
\caption{Accuracy test on uniform meshes for \eqref{numtestpoisson3}.}
\renewcommand{\arraystretch}{2}
\resizebox{\textwidth}{!}{
\begin{tabular}{|c|c|c|c|c|c|c|c|c|c|c|c|c|}
\hline
\multirow{2}{*}{Finite Difference Grid} & 
\multicolumn{4}{c|}{$Q^2$ spectral element method} & 
\multicolumn{4}{c|}{$P^2$ finite element method} &
\multicolumn{4}{c|}{9-point scheme \eqref{poisson-scheme3}} \\
\cline{2-13}
 & 
 $l^2$ error & order & $l^\infty$ error & order&
 $l^2$ error & order & $l^\infty$ error & order  &
$l^2$ error & order & $l^\infty$ error & order  \\
\hline
$7\times 7$  & 
        3.62E-1     & - &         1.10E-0     & - & 
        9.68E-1     & - &         2.59E-0     & - & 
        2.48E-2     & - &         5.69E-2     & - 
 \\
\hline
$15\times 15$ & 
        3.75E-2     &     3.26     &         9.68E-2     &     3.50     & 
        7.81E-2     &     3.63     &         3.00E-1     &     3.11     & 
    2.61E-4     &     6.56     &     6.46E-4     &     6.45     
 \\
\hline
$31\times 31$ &
        2.44E-3     &     3.94     &         7.18E-3     &     3.75     & 
        4.70E-3    &     4.05     &         1.84E-2     &     4.02     & 
    3.65E-5     &     2.84     &     8.97E-5     &     2.85     
 \\
\hline
$63\times 63$ &
    1.54E-4     &     3.98     &     5.50E-4     &     3.70     & 
    2.89E-4     &     4.02     &         1.11E-3     &     4.04     & 
    2.55E-6     &     3.83     &     6.57E-6     &     3.77     
 \\ \hline
\end{tabular}}
\resizebox{\textwidth}{!}{
\begin{tabular}{|c|c|c|c|c|c|c|c|c|}
\hline
\multirow{2}{*}{Finite Difference Grid} & 
\multicolumn{4}{c|}{compact finite difference \eqref{poisson-scheme2}}  &
\multicolumn{4}{c|}{Bramble-Hubbard scheme} \\ 
\cline{2-9}
 & 
 $l^2$ error & order & $l^\infty$ error & order  &
$l^2$ error & order & $l^\infty$ error & order  \\
\hline
$7\times 7$  & 
        9.88E-2     & - &         2.26E-1     & - & 
        3.14E-1     & - &         8.23E-1     & - 
 \\
\hline
$15\times 15$ & 
        5.40E-3     &     4.19     &         1.33E-2     &     4.08     & 
        1.76E-2     &     4.15     &         6.16E-2     &     3.73     
 \\
\hline
$31\times 31$ &
    3.22E-4     &     4.06    &     7.91E-4     &     4.07     & 
        3.38E-3     &     2.37     &         1.15E-2     &     2.41     
 \\
\hline
$63\times 63$ &
    1.98E-5     &     4.01     &     5.11E-5     &     3.95     & 
    3.04E-4     &     3.47     &         1.20E-3     &     3.32     
 \\ \hline
\end{tabular}}
\label{table-3}
\end{table}

\begin{table}[htbp]
\centering
\caption{Accuracy test of $Q^3$ spectral element method on uniform meshes.}
\renewcommand{\arraystretch}{2}
\scalebox{0.9}{
\begin{tabular}{|c|c|c|c|c|c|}
\hline
\multirow{1}{*}{$Q^3$ Finite Element Mesh} & \multirow{1}{*}{ Finite Difference Grid} &
 $l^2$ error & order & $l^\infty$ error & order   \\
\hline
$2\times 2$  & $5\times 5$ &
   1.18E0     &     -        &     2.61E0     &     - 
 \\
\hline
$4\times 4$ & $11\times 11$ &
    6.08E-2     &     4.28     &    1.45E-1     &    4.17     
 \\
\hline
$8\times 8$ &$23\times 23$&
    2.87E-3     &    4.40    &    7.10E-3     &     4.35     
 \\
\hline
$16\times 16$ &$47\times 47$&
    9.82E-5     &     4.87     &     2.41E-4    &     4.88     
 \\ \hline
 $32\times 32$ &$95\times 95$&
    3.12E-6     &     4.97     &     7.60E-6    &     4.99     
 \\ \hline
\end{tabular}}
\label{table-q3}
\end{table}

 \section{Concluding remarks}
 \label{sec:6}
 
We have proven that the $Q^3$ spectral element method on a uniform mesh is monotone, by proving its finite difference scheme matrix is a product of four M-matrices for two-dimensional Laplace operator.

\vspace{-0.2cm}\section*{Acknowledgments}
\vspace{-0.2cm}
Research was supported by National Science Foundation DMS-1913120.


\bibliographystyle{plain}      
\bibliography{references.bib}

\end{document}